\documentclass[12pt]{amsart}
\usepackage{amscd,amsmath,amssymb,enumerate,amsfonts,mathrsfs,txfonts}
\usepackage{comment}
\usepackage{hyperref}
\usepackage{xcolor}
\usepackage{tikz,pgfplots,pgf}
\usepackage{vmargin}
\usepackage[all]{xy}

\setcounter{MaxMatrixCols}{10}

\tikzset{node distance=3cm, auto}
\usetikzlibrary{calc} 
\usetikzlibrary{trees} 

\setpapersize{A4}
\setmargins{2.5cm}      
{1.5cm}                        
{16.5cm}                      
{23.42cm}                   
{10pt}                          
{1cm}                           
{0pt}                          
{2cm}

\newtheorem{theorem}{Theorem}[section]
\newtheorem{lemma}[theorem]{Lemma}
\newtheorem{proposition}[theorem]{Proposition}
\newtheorem{corollary}[theorem]{Corollary}
\theoremstyle{definition}
\newtheorem{definition}[theorem]{Definition}

\numberwithin{equation}{section}

\def\N{\mathbb{N}}
\def\R{\mathbb{R}}
\def\C{\mathbb{C}}
\def\Cu{\mathscr{C}}

\def\F{\mathscr{F}}
\def\G{\mathcal{G}}
\def\H{\mathcal{H}}

\def\L{\mathcal{L}}
\def\D{\mathbb{D}}
\def\B{\mathcal{B}}

\newcommand{\innerproduct}[2]{\langle #1 \, | \, #2 \rangle}

\def\lin{\mathrm{lin}}

\makeatletter
\@namedef{subjclassname@2020}{\textup{2020} Mathematics Subject Classification}
\makeatother

\begin{document}

\title[Factorization of Bloch mappings through a Hilbert space]{Factorization of Bloch mappings through a Hilbert space}

\author[M. G. Cabrera-Padilla]{M. G. Cabrera-Padilla}
\address[M. G. Cabrera-Padilla]{Departamento de Matem\'aticas, Universidad de Almer\'ia, Ctra. de Sacramento s/n, 04120 La Ca\~{n}ada de San Urbano, Almer\'ia, Spain.}
\email{m\_gador@hotmail.com}

\author[A. Jim{\'e}nez-Vargas]{A. Jim\'enez-Vargas}
\address[A. Jim{\'e}nez-Vargas]{Departamento de Matem\'aticas, Universidad de Almer\'ia, Ctra. de Sacramento s/n, 04120 La Ca\~{n}ada de San Urbano, Almer\'ia, Spain.}
\email{ajimenez@ual.es}

\author[D. Ruiz-Casternado]{D. Ruiz-Casternado}
\address[D. Ruiz-Casternado]{Departamento de Matem\'aticas, Universidad de Almer\'ia, Ctra. de Sacramento s/n, 04120 La Ca\~{n}ada de San Urbano, Almer\'ia, Spain.}
\email{drc446@ual.es}

\date{\today}

\subjclass[2020]{30H30, 46E15, 46E40, 47B38}

\keywords{Vector-valued Bloch mapping, Banach-valued Bloch molecule, Bloch-free Banach space, factorization through a Hilbert space.}

\thanks{Research partially supported by Ministerio de Ciencia e Innovaci\'{o}n grant PID2021-122126NB-C31 funded by MICIU/AEI/10.13039/501100011033 and by ERDF/EU, and by Junta de Andaluc\'ia grant FQM194.}

\begin{abstract}
We introduce the concept of vector-valued holomorphic mappings on the complex unit disc that factor through a Hilbert space and state the main properties of the space formed by such Bloch mappings equipped with a natural norm: linearization, Bloch transposition, surjective and injective Banach ideal property, Kwapie\'n-type characterization by Bloch domination, and duality.
\end{abstract}
\maketitle


\section*{Introduction}\label{section 0}

Bounded linear operators between Banach spaces that factor through a Hilbert space appeared in Grothendieck's R\'esume \cite{Gro-53}, with the name of $h$-integral operators. Their study was encouraged by Lindenstrauss and Pe\l czy\'{n}ski \cite{LinPel-68}, and their duality theory was addressed by Kwapie\'n \cite{Kwa-72}. Basic results on this class of operators can be consulted in the monographs by Diestel, Jarchow and Tonge \cite[Chapter 7]{DisJarTon-95}, Li and Queff\'{e}lec \cite[Chapter 5]{LiQue-18} and Pisier \cite[Chapter 2]{Pis-86}. 

The extension of this theory to different nonlinear settings has been promoted by some authors. This is the case of completely bounded operators between operator spaces by Pisier \cite{Pis-96}, Lipschitz mappings between metric spaces by Ch\'avez-Dom\'inguez \cite{Cha-14}, multilinear operators between Banach spaces by Fern\'andez-Unzueta and Garc\'ia-Hern\'andez \cite{FerGar-19}, and $2$-dominated polynomials by Rueda and S\'anchez P\'erez \cite{RueSan-14}.

We will study in this note the factorization through a Hilbert space of vector-valued Bloch mappings on the complex unit open disc $\D$. 

Let $X$ be a complex Banach space and let $\H(\D,X)$ be the space of all holomorphic mappings from $\D$ into $X$. A mapping $f\in\H(\D,X)$ is said to be Bloch if there is a constant $c\geq 0$ such that $(1-|z|^2)\left\|f'(z)\right\|\leq c$ for all $z\in\D$. The normalized Bloch space $\widehat{\B}(\D,X)$ is the Banach space of all mappings $f\in\H(\D,X)$ with $f(0)=0$ so that   
$$
\rho_{\B}(f):=\sup\left\{(1-|z|^2)\left\|f'(z)\right\|\colon z\in\D\right\}<\infty ,
$$ 
equipped with the Bloch norm $\rho_{\B}$. To simplify, we will write $\widehat{\B}(\D)$ instead of $\widehat{\B}(\D,\C)$. Spaces of Bloch functions have been studied, among others, by Anderson, Clunie and Pommerenke \cite{AndCluPom-74} and Zhu \cite{Zhu-07} in the complex-valued case, and by Arregui and Blasco \cite{ArrBlas-03,Bla-90} in the vector-valued case.

Given two Banach spaces $X$ and $Y$, we denote by $\mathcal{L}(X,Y)$ the Banach space of all bounded linear operators from $X$ into $Y$, equipped with the operator canonical norm. As usual, $X^*$ stands for $\mathcal{L}(X,\mathbb{C})$.  

A mapping $f\in\H(\D,X)$ is said to factor through a Hilbert space if there exists a Hilbert space $H$, a mapping $g\in\widehat{\B}(\D,H)$ and an operator $T\in\L(H,X)$ such that $f=T\circ g$, that is, the following diagram commutes:
\begin{equation*}
	\xymatrix{
	 &H  \ar[dr]^{T} &  \\
	\D \ar[ur]^{g} \ar[rr]_{f}  & &X  
	}
\end{equation*}
For such a $f$, we set $\gamma^{\B}_2(f)=\inf\left\{\left\|T\right\|\rho_\B(g)\right\}$, where the infimum runs over all possible factorizations of $f$ as above. We denote by $\Gamma^{\widehat{\B}}_2(\D,X)$ the linear space of all mappings $f\in\H(\D,X)$ for which $f$ admits such a factorization. 

The key tool to obtain our results is a linearization process of Bloch mappings on $\D$ by using a predual space of $\widehat{\B}(\D)$, called Bloch-free space over $\D$ and denoted by $\G(\D)$. We will devote Section \ref{1} on preliminaries to recall this process that was introduced in \cite{JimRui-22} and applied to study $p$-summing Bloch mappings on $\D$ in \cite{CabJimRui-23}. Once this is done, we will present the most important properties of $\Gamma^{\widehat{\B}}_2$-spaces in five sections. 

In Section \ref{2}, we characterize the factorization through a Hilbert space of a zero-preserving Bloch mapping from $\D$ into $X$ by means of the linear factorization through a Hilbert space of its linearization from $\G(\D)$ into $X$ as well as of its Bloch transposition from $X^*$ into $\widehat{\B}(\D)$. 

Using this characterization, we prove in Section \ref{3} that $[\Gamma^{\widehat{\B}}_2,\gamma^{\B}_2]$ is a surjective and injective Banach ideal of normalized Bloch mappings. In \cite{CabJimRui-23}, the concept of $p$-summing Bloch mapping from $\D$ into $X$ for $1\leq p\leq\infty$ was introduced and studied. We show that every $2$-summing Bloch mapping $f\colon\D\to X$ with $f(0)=0$ factors through a Hilbert space. 

Section \ref{4} contains the main result of this paper which states a Bloch version of a known theorem of Kwapie\'n \cite{Kwa-72} by characterizing holomorphic mappings that factor through a Hilbert space in terms of a Bloch subordination/domination criterion of finite sequences of elements of $\C\times\D$. 

Section \ref{5} presents a convenient norm on the space of the so-called $X$-valued Bloch molecules on $\D$ so that the dual of the completion of this last space is identified with $(\Gamma^{\widehat{\B}}_2(\D,X^*),\gamma^{\B}_2)$.  


\section{Preliminaries}\label{1}

We first collect some useful results on the linearization of Bloch mappings borrowed from \cite{JimRui-22}. For each $z\in\D$, a Bloch atom of $\D$ is a bounded linear functional $\gamma_z\colon\widehat{\B}(\D)\to\mathbb{C}$ given by 
$$
\gamma_z(f)=f'(z)\qquad (f\in\widehat{\B}(\D)).
$$
The elements of $\lin(\{\gamma_z\colon z\in\D\})$ in $\widehat{\B}(\D)^*$ are called Bloch molecules of $\D$. There exist other standard designations in the literature for Bloch atoms and Bloch molecules as, for example, point evaluations and characters or states, respectively. The Bloch-free Banach space over $\D$, denoted by $\G(\D)$, is the norm-closed linear hull of $\left\{\gamma_z\colon z\in\D\right\}$ in $\widehat{\B}(\D)^*$. As usual, for any $T\in\L(X,Y)$, $T^*\colon Y^*\to X^*$ denotes the adjoint operator of $T$. 

\begin{theorem}\cite{JimRui-22}\label{main-theo}
\begin{enumerate}
\item For every $z\in\D$, the function $f_z\colon\D\to\C$ defined by 
$$
f_z(w)=\frac{(1-|z|^2)w}{1-\overline{z}w}\qquad (w\in\D),
$$ 
belongs to $\widehat{\B}(\D)$ with $\rho_{\B}(f_z)=1=(1-|z|^2)f_z'(z)$.
\item The mapping $\Gamma\colon\D\to\G(\D)$, defined by $\Gamma(z)=\gamma_z$ for all $z\in\D$, is holomorphic with $\left\|\gamma_z\right\|=1/(1-|z|^2)$ for all $z\in\D$. 
\item $\widehat{\B}(\D)$ is isometrically isomorphic to $\G(\D)^*$, via the evaluation mapping $\Lambda\colon\widehat{\B}(\D)\to\G(\D)^*$ given by
$$
\Lambda(f)(\gamma)=\sum_{k=1}^n\lambda_kf'(z_k)\qquad (f\in\widehat{\B}(\D),\; \gamma=\sum_{k=1}^n\lambda_k\gamma_{z_k}\in\lin(\Gamma(\D))).
$$
\item For every holomorphic function $h\colon\D\to\D$ with $h(0)=0$, the composition operator $C_h\colon\widehat{\B}(\D)\to\widehat{\B}(\D)$, defined by $C_h(f)=f\circ h$, is in  $\L(\widehat{\B}(\D),\widehat{\B}(\D))$ with $\rho_\B(h)\leq||C_h||\leq 1$. 
\item For every holomorphic function $h\colon\D\to\D$ such that $h(0)=0$, there exists a unique operator $\widehat{h}\in\L(\G(\D),\G(\D))$ such that $\widehat{h}\circ\Gamma=h'\cdot(\Gamma\circ h)$. In fact, $\left(\widehat{h}\right)^*\equiv C_h$.
\item For every complex Banach space $X$ and every mapping $f\in\widehat{\B}(\D,X)$, there exists a unique operator $S_f\in\L(\G(\D),X)$ such that $S_f\circ\Gamma=f'$. Furthermore, $||S_f||=\rho_{\B}(f)$.  
\item The correspondence $f\mapsto S_f$ is an isometric isomorphism from $\widehat{\B}(\D,X)$ onto $\L(\G(\D),X)$.
\item Given $f\in\widehat{\B}(\D,X)$, the map $f^t\colon X^*\to \widehat{\B}(\D)$, defined by $f^t(x^*)=x^*\circ f$ and called Bloch transpose of $f$, is in $\L(X^*,\widehat{\B}(\D))$ with $\|f^t\|=\rho_\B(f)$. Furthermore, $f^t=\Lambda^{-1}\circ (S_f)^*$. $\hfill\qed$ 
\end{enumerate}
\end{theorem}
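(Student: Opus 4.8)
The plan is to verify first that $f^t$ is a well-defined bounded linear operator, then to establish the factorization formula $f^t=\Lambda^{-1}\circ(S_f)^*$, and finally to read off the norm identity $\|f^t\|=\rho_\B(f)$ from that formula together with the isometry properties already recorded in parts (3) and (6).

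First I would check that $f^t(x^*)=x^*\circ f$ genuinely lands in $\widehat{\B}(\D)$. Since $x^*\in X^*$ is bounded and linear and $f\in\H(\D,X)$, the composition $x^*\circ f$ is a holomorphic scalar function on $\D$ with $(x^*\circ f)(0)=x^*(f(0))=0$, and interchanging the linear map with differentiation gives $(x^*\circ f)'(z)=x^*(f'(z))$. Hence
$$
(1-|z|^2)\left|(x^*\circ f)'(z)\right|\leq (1-|z|^2)\|x^*\|\,\|f'(z)\|\leq \|x^*\|\,\rho_\B(f),
$$
so $\rho_\B(f^t(x^*))\leq\|x^*\|\rho_\B(f)$ and $f^t(x^*)\in\widehat{\B}(\D)$. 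Linearity of $f^t$ is immediate from the linearity of $x^*\mapsto x^*\circ f$, and the displayed estimate already shows that $f^t\in\L(X^*,\widehat{\B}(\D))$ with $\|f^t\|\leq\rho_\B(f)$.

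The core of the argument is the identity $f^t=\Lambda^{-1}\circ(S_f)^*$, equivalently $\Lambda\circ f^t=(S_f)^*$. Both sides are bounded linear maps from $X^*$ into $\G(\D)^*$, so it suffices to check that, for each $x^*\in X^*$, the functionals $\Lambda(f^t(x^*))$ and $(S_f)^*(x^*)$ agree on the dense subspace $\lin(\Gamma(\D))$ of $\G(\D)$, and by linearity this reduces to testing against a single Bloch atom $\gamma_z=\Gamma(z)$. On one side, the defining formula for $\Lambda$ in part (3) gives $\Lambda(f^t(x^*))(\gamma_z)=(x^*\circ f)'(z)=x^*(f'(z))$. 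On the other side, using $S_f\circ\Gamma=f'$ from part (6),
$$
(S_f)^*(x^*)(\gamma_z)=x^*\big(S_f(\Gamma(z))\big)=x^*\big((S_f\circ\Gamma)(z)\big)=x^*(f'(z)).
$$
The two agree, which yields the factorization on $\lin(\Gamma(\D))$ and hence, by continuity and density, on all of $\G(\D)$.

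Finally, the norm identity follows at once: since $\Lambda$ is an isometric isomorphism (part (3)) and passing to the adjoint preserves the operator norm, $\|f^t\|=\|\Lambda^{-1}\circ(S_f)^*\|=\|(S_f)^*\|=\|S_f\|=\rho_\B(f)$ by part (6). I do not anticipate a serious obstacle here; the proof is essentially routine, and the only points demanding care are the interchange of differentiation with the functional $x^*$ and the justification that agreement on the spanning atoms $\gamma_z$ propagates to all of $\G(\D)$. As a self-contained cross-check, one may instead obtain $\|f^t\|\geq\rho_\B(f)$ without the adjoint identity by combining the upper bound above with the Hahn--Banach equality $(1-|z|^2)\|f'(z)\|=\sup_{\|x^*\|\leq1}(1-|z|^2)|x^*(f'(z))|\leq\|f^t\|$.
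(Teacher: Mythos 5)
Your argument is correct as far as it goes, but it only addresses item (viii) of an eight-item theorem. The statement also asserts, among other things, that each $f_z$ is a norm-one Bloch function peaking at $z$ (item (i)), that $\Gamma$ is holomorphic with $\|\gamma_z\|=1/(1-|z|^2)$ (item (ii)), the isometric duality $\widehat{\B}(\D)\cong\G(\D)^*$ via $\Lambda$ (item (iii)), the properties of composition operators and their liftings $\widehat{h}$ (items (iv)--(v)), and the universal linearization property $S_f\circ\Gamma=f'$ with $\|S_f\|=\rho_\B(f)$ (items (vi)--(vii)). Your proof takes items (iii) and (vi) as given and deduces (viii) from them; those are precisely the substantive claims that carry the weight of the theorem, so treating them as black boxes leaves most of the statement unproved. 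Note that the paper itself does not prove this theorem either: it is quoted from the reference [JimRui-22] and closed with a \qed, so there is no internal proof to compare against. Within its limited scope, your treatment of the Bloch transpose is sound: the interchange $(x^*\circ f)'=x^*\circ f'$, the reduction of $\Lambda\circ f^t=(S_f)^*$ to agreement on the atoms $\gamma_z$ followed by density, and the norm chain $\|f^t\|=\|(S_f)^*\|=\|S_f\|=\rho_\B(f)$ are all exactly the standard route, and your Hahn--Banach cross-check of the lower bound is a nice independent verification. To turn this into a proof of the stated theorem you would need to supply arguments for items (i)--(vii), in particular the predual construction underlying (iii) and (vi).
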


We will now present the most important properties of vector-valued Bloch mappings on $\D$ that factor through a Hilbert space.


\section{Linearization / Bloch transposition}\label{2}

We study the relationships of a map $f\in\Gamma^{\widehat{\B}}_2(\D,X)$ with both its linearization $S_f\in\L(\G(\D),X)$ and its Bloch transposition $f^t\in\L(X^*,\widehat{\B}(\D))$. Compare with \cite[Proposition 7.2]{DisJarTon-95}. 

Let us recall that given Banach spaces $X$ and $Y$, an operator $T\in\L(X,Y)$ is said to factor through a Hilbert space if there exists a Hilbert space $H$ and operators $B\in\L(X,H)$ and $A\in\L(H,Y)$ such that $T=A\circ B$. The set of all operators $T\in\L(X,Y)$ admitting  such a factorization is denoted by $\Gamma_2(X,Y)$, and it becomes a Banach space equipped with the norm $\gamma_2(T)=\inf\left\{\left\|A\right\|\left\|B\right\|\right\}$, where the infimum extends over all possible factorizations of $T$ as above. 

We begin with the following easy observation.

\begin{lemma}\label{lem-1}
Let $X$ be a complex Banach space. Then $\Gamma^{\widehat{\B}}_2(\D,X)\subseteq\widehat{\B}(\D,X)$ with $\rho_{\B}(f)\leq\gamma_2^{\B}(f)$ for all $f\in\Gamma^{\widehat{\B}}_2(\D,X)$.
\end{lemma}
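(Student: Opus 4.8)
The plan is to start from the definition: if $f\in\Gamma^{\widehat{\B}}_2(\D,X)$, then $f$ admits at least one factorization $f=T\circ g$ with $H$ a Hilbert space, $g\in\widehat{\B}(\D,H)$ and $T\in\L(H,X)$. I would fix one such factorization and show that it forces $f$ to lie in $\widehat{\B}(\D,X)$ together with the pointwise bound on the derivative; the norm inequality then follows by passing to the infimum over all factorizations.

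The key steps are as follows. First I would check membership in $\widehat{\B}(\D,X)$: since $g$ is holomorphic and $T$ is a bounded linear operator, the composition $f=T\circ g$ is holomorphic on $\D$, and $f(0)=T(g(0))=T(0)=0$ because $g(0)=0$ by the definition of $\widehat{\B}(\D,H)$. Next I would compute the derivative. Because $T$ is linear and continuous, the chain rule gives $f'(z)=T(g'(z))$ for every $z\in\D$, so that
\begin{equation*}
(1-|z|^2)\left\|f'(z)\right\|=(1-|z|^2)\left\|T(g'(z))\right\|\leq\left\|T\right\|(1-|z|^2)\left\|g'(z)\right\|\leq\left\|T\right\|\rho_{\B}(g).
\end{equation*}
Taking the supremum over $z\in\D$ yields $\rho_{\B}(f)\leq\left\|T\right\|\rho_{\B}(g)<\infty$, which simultaneously shows $f\in\widehat{\B}(\D,X)$ and gives the desired estimate for this particular factorization.

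Finally I would note that the inequality $\rho_{\B}(f)\leq\left\|T\right\|\rho_{\B}(g)$ holds for every admissible factorization of $f$, so taking the infimum over all such factorizations gives $\rho_{\B}(f)\leq\inf\left\{\left\|T\right\|\rho_{\B}(g)\right\}=\gamma^{\B}_2(f)$, as claimed.

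I do not expect any serious obstacle here; this is the ``easy observation'' flagged in the text. The only point that deserves a word of care is the justification that $f'(z)=T(g'(z))$, i.e.\ that the vector-valued derivative commutes with the bounded linear operator $T$; this is immediate from linearity and continuity of $T$ applied to the difference quotients of $g$, but it is the one place where I would make sure the argument is stated cleanly.
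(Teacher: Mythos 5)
Your proposal is correct and follows essentially the same argument as the paper: fix a factorization $f=T\circ g$, use the chain rule to get $f'(z)=T(g'(z))$, bound $(1-|z|^2)\left\|f'(z)\right\|$ by $\left\|T\right\|\rho_{\B}(g)$, and take the infimum over factorizations. The extra remarks on $f(0)=0$ and on commuting the derivative with $T$ are fine but do not change the substance.
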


\begin{proof}
If $f\in\Gamma^{\widehat{\B}}_2(\D,X)$, then $f=T\circ g$, where $g\in\widehat{\B}(\D,H)$ and $T\in\L(H,X)$ for some Hilbert space $H$. We have 
$$
(1-|z|^2)\left\|f'(z)\right\|=(1-|z|^2)\left\|T(g'(z))\right\|\leq \left\|T\right\|(1-|z|^2)\left\|g'(z)\right\|\leq \left\|T\right\|\rho_{\B}(g)
$$
for all $z\in\D$, and therefore $f\in\widehat{\B}(\D,X)$ with $\rho_{\B}(f)\leq\left\|T\right\|\rho_{\B}(g)$. Taking the infimum over all such factorizations of $f$, we obtain $\rho_{\B}(f)\leq\gamma_2^{\B}(f)$.
\end{proof}

\begin{theorem}\label{theo-1}
Let $X$ be a complex Banach space and $f\in\widehat{\B}(\D,X)$. The following statements are equivalent:
\begin{enumerate}
\item $f\colon\D\to X$ factors through a Hilbert space.
\item $S_f\colon\G(\D)\to X$ factors through a Hilbert space.
\item $f^t\colon X^*\to\widehat{\B}(\D)$ factors through a Hilbert space.
\end{enumerate}
In this case, $\gamma^{\B}_2(f)=\gamma_2(S_f)=\gamma_2(f^t)$. Furthermore, the mapping $f\mapsto S_{f}$ is an isometric isomorphism from $\Gamma^{\widehat{\B}}_2(\D,X)$ onto $\Gamma_2(\G(\D),X)$.
\end{theorem}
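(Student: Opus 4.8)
The plan is to transport everything through the linearization correspondence $f\mapsto S_f$ of Theorem~\ref{main-theo}, so that the two nonlinear equivalences collapse onto a single linear fact together with one compatibility identity between factorizations. The cornerstone is that a Bloch factorization of $f$ and an operator factorization of $S_f$ are the same object read through the isometric isomorphism $\widehat{\B}(\D,H)\cong\L(\G(\D),H)$ of Theorem~\ref{main-theo}(vii). Concretely, I would first record that for any Hilbert space $H$, any $g\in\widehat{\B}(\D,H)$ and any $T\in\L(H,X)$ one has $T\circ g\in\widehat{\B}(\D,X)$ (as in Lemma~\ref{lem-1}, and $(T\circ g)(0)=0$) together with the identity $S_{T\circ g}=T\circ S_g$. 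Indeed, differentiating gives $(T\circ g)'=T\circ g'$, so $S_{T\circ g}\circ\Gamma=(T\circ g)'=T\circ g'=T\circ S_g\circ\Gamma$ by the defining property in Theorem~\ref{main-theo}(vi); since $\lin(\Gamma(\D))$ is dense in $\G(\D)$ and both sides are bounded, they agree on all of $\G(\D)$.

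For (i)$\Rightarrow$(ii), a factorization $f=T\circ g$ yields $S_f=T\circ S_g$, which is a Hilbert-space factorization of $S_f$ because $S_g\in\L(\G(\D),H)$; moreover $\gamma_2(S_f)\leq\|T\|\,\|S_g\|=\|T\|\rho_\B(g)$, and taking the infimum gives $\gamma_2(S_f)\leq\gamma^{\B}_2(f)$. For the converse (ii)$\Rightarrow$(i), given $S_f=A\circ B$ with $B\in\L(\G(\D),H)$ and $A\in\L(H,X)$, I would use Theorem~\ref{main-theo}(vii) to select the unique $g\in\widehat{\B}(\D,H)$ with $S_g=B$; then $S_{A\circ g}=A\circ S_g=A\circ B=S_f$, and injectivity of $f\mapsto S_f$ forces $f=A\circ g$, a genuine Bloch factorization with $\rho_\B(g)=\|S_g\|=\|B\|$. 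This gives $\gamma^{\B}_2(f)\leq\|A\|\rho_\B(g)=\|A\|\,\|B\|$, hence $\gamma^{\B}_2(f)\leq\gamma_2(S_f)$, and therefore $\gamma^{\B}_2(f)=\gamma_2(S_f)$.

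For (ii)$\Leftrightarrow$(iii) I would invoke the identity $f^t=\Lambda^{-1}\circ(S_f)^*$ of Theorem~\ref{main-theo}(viii). Since $\Lambda$ is an isometric isomorphism, composing with $\Lambda$ or $\Lambda^{-1}$ neither creates nor destroys a Hilbert-space factorization and leaves $\gamma_2$ unchanged, so $\gamma_2(f^t)=\gamma_2((S_f)^*)$. It then remains to apply the classical symmetry of the ideal $[\Gamma_2,\gamma_2]$ under adjoints, namely that an operator factors through a Hilbert space if and only if its adjoint does, with equal $\gamma_2$-norm (see~\cite[Chapter~7]{DisJarTon-95}); this gives $\gamma_2(S_f)=\gamma_2((S_f)^*)$, whence (ii)$\Leftrightarrow$(iii) and $\gamma_2(S_f)=\gamma_2(f^t)$. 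Stringing the two chains together produces the displayed equalities $\gamma^{\B}_2(f)=\gamma_2(S_f)=\gamma_2(f^t)$.

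The final assertion is then immediate: $f\mapsto S_f$ is the restriction to $\Gamma^{\widehat{\B}}_2(\D,X)$ of the isometric isomorphism of Theorem~\ref{main-theo}(vii), it is norm-preserving by $\gamma^{\B}_2(f)=\gamma_2(S_f)$, it is injective, and it is onto $\Gamma_2(\G(\D),X)$ because every $U\in\Gamma_2(\G(\D),X)$ equals $S_f$ for a unique $f\in\widehat{\B}(\D,X)$, and that $f$ lies in $\Gamma^{\widehat{\B}}_2(\D,X)$ by (ii)$\Rightarrow$(i). I expect the only step requiring genuine care to be this converse (ii)$\Rightarrow$(i): one cannot factor $f$ directly, but must instead manufacture the Bloch factor $g$ from the operator $B$ through the linearization isomorphism, verifying the normalization $g(0)=0$ and the identity $S_{A\circ g}=A\circ S_g$ so that injectivity of the correspondence can be applied to conclude $f=A\circ g$.
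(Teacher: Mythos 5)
Your proof is correct and follows essentially the same route as the paper: transport factorizations through the linearization $f\mapsto S_f$ for (i)$\Leftrightarrow$(ii), then use $f^t=\Lambda^{-1}\circ(S_f)^*$ and the adjoint-invariance of $[\Gamma_2,\gamma_2]$ for (ii)$\Leftrightarrow$(iii). The only cosmetic difference is in (ii)$\Rightarrow$(i), where the paper produces the Bloch factor as an antiderivative of $S\circ\Gamma$ via Lemma~2.9 of \cite{JimRui-22} and checks $\rho_\B(h)\leq\|S\|$ directly, while you obtain it from the surjectivity of $f\mapsto S_f$ in Theorem~\ref{main-theo}(vii); these amount to the same construction.
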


\begin{proof}
$(i)\Rightarrow (ii)$: If $f\in\Gamma^{\widehat{\B}}_2(\D,X)$, then $f=T\circ g$, where $g\in\widehat{\B}(\D,H)$ and $T\in\L(H,X)$ for some Hilbert space $H$. By Theorem \ref{main-theo}, we have the factorizations $f'=S_f\circ\Gamma$ and $g'=S_g\circ\Gamma$, where $S_f\in\L(\G(\D),X)$ and $S_g\in\L(\G(\D),H)$. Hence $S_f\circ\Gamma=T\circ S_g\circ\Gamma$ and this implies that $S_f=T\circ S_g$ since $\Gamma(\D)$ is a dense linear subspace of $\G(\D)$. Therefore $S_f\in\Gamma_2(\G(\D),X)$ with $\gamma_2(S_f)\leq\left\|T\right\|\left\|S_g\right\|=\left\|T\right\|\rho_\B(g)$. Taking the infimum over all factorizations of $f'$ as above, we have $\gamma_2(S_f)\leq\gamma^{\B}_2(f)$.

$(ii)\Rightarrow (i)$: If $S_f\in\Gamma_2(\G(\D),X)$, then $S_f=T\circ S$, where $S\in\L(\G(\D),H)$ and $T\in\L(H,X)$ for some Hilbert space $H$. Hence $f'=S_f\circ\Gamma=T\circ S\circ\Gamma$ by Theorem \ref{main-theo}. Since $S\circ\Gamma\in\H(\D,H)$, Lemma 2.9 in \cite{JimRui-22} gave us a mapping $h\in\H(\D,H)$ with $h(0)=0$ such that $h'=S\circ\Gamma$. Since 
$$
(1-|z|^2)\left\|h'(z)\right\|=(1-|z|^2)\left\|S(\Gamma(z))\right\|\leq\left\|S\right\|
$$
for all $z\in\D$, it follows that $h\in\widehat{\B}(\D,H)$ with $\rho_{\B}(h)\leq\left\|S\right\|$. Hence $f'=T\circ h'$ and therefore $f\in\Gamma^{\widehat{\B}}_2(\D,X)$ with $\gamma^{\B}_2(f)\leq\left\|T\right\|\left\|S\right\|$. Taking the infimum over all factorizations of $S_f$, we obtain $\gamma^{\B}_2(f)\leq\gamma_2(S_f)$, as desired.

$(ii)\Leftrightarrow (iii)$: Applying first \cite[Proposition 7.2]{DisJarTon-95}, and then Theorem \ref{main-theo} with \cite[Theorem 7.1]{DisJarTon-95}, we get
$$
S_f\in\Gamma_2(\G(\D),X)\Leftrightarrow (S_f)^*\in\Gamma_2(X^*,\G(\D)^*)\Leftrightarrow f^t=\Lambda^{-1}\circ(S_f)^*\in\Gamma_2(X^*,\widehat{\B}(\D)),
$$
and, in this case, $\gamma_2(S_f)=\gamma_2((S_f)^*)=\gamma_2(f^t)$. 

The last assertion of the statement follows immediately by applying that the correspondence $f\mapsto S_f$ from $\widehat{\B}(\D,X)$ into $\L(\G(\D),X)$ is surjective and from what was proved above.
\end{proof}

\section{Ideal property}\label{3}

The concept of a Banach ideal of normalized Bloch mappings was introduced in \cite[Definition 5.11]{JimRui-22}, inspired by the notion of Banach operator ideal due to Pietsch \cite{Pie-80}.

\begin{proposition}\label{prop-10}
$[\Gamma^{\widehat{\B}}_2,\gamma^{\B}_2]$ is a surjective and injective Banach ideal of normalized Bloch mappings. 
\end{proposition}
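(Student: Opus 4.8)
The plan is to derive every clause of the statement from the corresponding property of the classical operator ideal $[\Gamma_2,\gamma_2]$, which is a surjective and injective Banach operator ideal (\cite[Chapter 7]{DisJarTon-95}), transporting it to the Bloch level through the linearization $f\mapsto S_f$. The backbone is Theorem \ref{theo-1}, asserting that this correspondence is an isometric isomorphism of $\Gamma^{\widehat{\B}}_2(\D,X)$ onto $\Gamma_2(\G(\D),X)$. Before anything else I would record two functorial identities, each verified on the atoms $\gamma_z$ and extended by density of $\lin(\Gamma(\D))$ in $\G(\D)$: for $T\in\L(X,Y)$ one has $S_{T\circ f}=T\circ S_f$, and for holomorphic $h\colon\D\to\D$ with $h(0)=0$ one has $S_{f\circ h}=S_f\circ\widehat{h}$, where $\|\widehat{h}\|=\|C_h\|\leq 1$ by Theorem \ref{main-theo}.

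With these identities the axioms of a Banach ideal of normalized Bloch mappings in the sense of \cite[Definition 5.11]{JimRui-22} fall out by inspection. For the ideal inequality I would note that $S_{T\circ f\circ h}=T\circ S_f\circ\widehat{h}$ belongs to $\Gamma_2(\G(\D),Y)$ with $\gamma_2$-norm at most $\|T\|\,\gamma_2(S_f)$, so that $T\circ f\circ h\in\Gamma^{\widehat{\B}}_2(\D,Y)$ and $\gamma^{\B}_2(T\circ f\circ h)\leq\|T\|\,\gamma^{\B}_2(f)$. For the elementary mappings $g\cdot x$ with $g\in\widehat{\B}(\D)$ and $x\in X$, the linearization factors as $S_{g\cdot x}=R\circ S_g$ with $R\colon\C\to X$, $R(\lambda)=\lambda x$; this passes through the Hilbert space $\C$, and since $\gamma_2$ coincides with the operator norm on rank-one maps we obtain $\gamma^{\B}_2(g\cdot x)=\rho_{\B}(g)\|x\|$. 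Completeness of $(\Gamma^{\widehat{\B}}_2(\D,X),\gamma^{\B}_2)$ and the domination $\rho_{\B}\leq\gamma^{\B}_2$ are then immediate from the isometry of Theorem \ref{theo-1} and from Lemma \ref{lem-1}.

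Injectivity is read off the first identity: for an isometric embedding $\iota\colon X\to W$ we have $S_{\iota\circ f}=\iota\circ S_f$, whence injectivity of $\Gamma_2$ yields $\iota\circ f\in\Gamma^{\widehat{\B}}_2(\D,W)\Leftrightarrow f\in\Gamma^{\widehat{\B}}_2(\D,X)$ with equal $\gamma^{\B}_2$-norm. Surjectivity is the step I expect to give the most trouble. Unlike injectivity, which lives on the codomain $X$ and therefore matches the codomain of $S_f$ verbatim, surjectivity is governed by metric surjections onto the \emph{domain} of the linearized operator, here the fixed Bloch-free space $\G(\D)$, so it cannot be read directly off quotients of $X$; indeed $\Gamma_2$ does not have the bounded lifting property through codomain quotients, since an operator out of a Hilbert space need not lift. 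The plan is therefore to check that the intrinsic notion of a surjective normalized Bloch ideal from \cite{JimRui-22} corresponds, under $f\mapsto S_f$, exactly to surjectivity of $\Gamma_2$ evaluated on the domain $\G(\D)$, and then to quote the classical surjectivity of $\Gamma_2$. Carrying out this identification — pinning down the canonical metric surjection onto $\G(\D)$ that plays the role of $\ell_1(B_{\G(\D)})\twoheadrightarrow\G(\D)$ and verifying that the Hilbertian factorizations descend through it with no loss in norm — is the crux; once it is in place the operator-level result applies at once, and the equality $\gamma^{\B}_2=\gamma_2(S_f)=\gamma_2(f^t)$ of Theorem \ref{theo-1} guarantees that all the norm identities are preserved.
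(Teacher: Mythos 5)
Your proposal follows the paper's route almost verbatim for everything except surjectivity. The backbone (Theorem \ref{theo-1} plus the identities $S_{T\circ f}=T\circ S_f$ and $S_{f\circ h}=S_f\circ\widehat{h}$ with $\|\widehat{h}\|\leq 1$, which the paper takes from \cite[Proposition 5.13]{JimRui-22}) is exactly what the paper uses for (N1), the ideal inequality and injectivity. For the elementary mappings $g\cdot x$ the paper observes that $S_{g\cdot x}=S_g\cdot x$ has finite rank and invokes $\F\subseteq\Gamma_2$, whereas you factor the rank-one operator through $\C$; both give $\gamma^{\B}_2(g\cdot x)\leq\rho_\B(g)\|x\|$, with the reverse inequality from (N1). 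Up to this point your argument is complete and correct.

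The one genuine gap is that you never actually prove surjectivity: you declare it ``the crux'' and leave a plan whose central difficulty does not exist. In the definition the paper works with (from \cite[Definition 5.11]{JimRui-22}), surjectivity of a normalized Bloch ideal is \emph{already} formulated on the domain side via precomposition: for every holomorphic $\pi\colon\D\to\D$ with $\pi(0)=0$ whose induced operator $\widehat{\pi}\in\L(\G(\D),\G(\D))$ is a metric surjection, membership of $f\circ\pi$ in the ideal must force membership of $f$ with the same norm. There is no canonical quotient ``playing the role of $\ell_1(B_{\G(\D)})\twoheadrightarrow\G(\D)$'' to pin down, and the identification you worry about is built into the definition rather than something to be verified. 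Given your own identity $S_{f\circ\pi}=S_f\circ\widehat{\pi}$, the whole step is one line: if $S_f\circ\widehat{\pi}\in\Gamma_2(\G(\D),X)$ and $\widehat{\pi}$ is a metric surjection, then $S_f\in\Gamma_2(\G(\D),X)$ with $\gamma_2(S_f)=\gamma_2(S_f\circ\widehat{\pi})$, by the surjectivity of the Banach operator ideal $[\Gamma_2,\gamma_2]$ (\cite[Proposition 7.3]{DisJarTon-95}). Your aside about the failure of a bounded lifting property is a red herring: surjectivity of an operator ideal is a statement about precomposition with quotient maps, not about lifting operators out of a Hilbert space, and $\Gamma_2$ does possess it.
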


\begin{proof}
We are going to show that $[\Gamma^{\widehat{\B}}_2,\gamma^{\B}_2]$ satisfies the properties of the aforementioned definition. Let $X$ be a complex Banach space.

(N1): $(\Gamma^{\widehat{\B}}_2(\D,X),\gamma^{\B}_2)$ is a Banach space with $\gamma^{\B}_2(f)\geq\rho_\B(f)$ for all $f\in\Gamma^{\widehat{\B}}_2(\D,X)$. 

\noindent\emph{Proof:} It follows directly from Lemma \ref{lem-1} and Theorem \ref{theo-1}. \\

(N2): For every $g\in\widehat{\B}(\D)$ and $x\in X$, the mapping $g \cdot x\colon z\mapsto g(z)x$ from $\D$ into $X$ is in $\Gamma^{\widehat{\B}}_2(\D,X)$ with $\gamma^{\B}_2(g\cdot x)=\rho_\B(g)\left\|x\right\|$. 

\noindent\emph{Proof:} 
It is easy to show that $g\cdot x\in\widehat{\B}(\D,X)$ with $(g\cdot x)'=g'\cdot x$ and $\rho_\B(g\cdot x)=\rho_\B(g)\left\|x\right\|$. Since $(g\cdot x)'=(S_g\cdot x)\circ\Gamma$, it follows that $S_{g\cdot x}=S_g\cdot x$ with $\rho_\B(g\cdot x)=\left\|S_{g\cdot x}\right\|=\left\|S_g\cdot x\right\|=\rho_\B(g)\left\|x\right\|$ by Theorem \ref{main-theo}, and therefore $S_{g\cdot x}\in\F(\G(\D),X)$, where $\F(\G(\D),X)$ denotes the subspace of $\L(\G(\D),X)$ formed by all finite-rank operators. Since $\F\subseteq\Gamma_2$, we have $S_{g\cdot x}\in\Gamma_2(\G(\D),X)$ and thus $g\cdot x\in\Gamma^{\widehat{\B}}_2(\D,X)$ with $\gamma^{\B}_2(g\cdot x)=\left\|S_{g\cdot x}\right\|=\rho_\B(g)\left\|x\right\|$ by Theorem \ref{theo-1}. To obtain the reverse inequality, apply (N1).\\


(N3) Ideal property: If $h\colon\D\to\D$ is a holomorphic function with $h(0)=0$, $f\in\Gamma^{\widehat{\B}}_2(\D,X)$ and $T\in\L(X,Y)$ where $Y$ is a complex Banach space, then $T\circ f\circ h\in\Gamma^{\widehat{\B}}_2(\D,Y)$ with $\gamma^{\B}_2(T\circ f\circ h)\leq\|T\|\gamma^{\B}_2(f)$.

\noindent\emph{Proof:} Note that $S_{T\circ f\circ h}=T\circ S_f\circ\widehat{h}\in\Gamma_2(\G(\D),Y)$ by \cite[Proposition 5.13]{JimRui-22}, Theorem \ref{theo-1} and the ideal property of $[\Gamma_2,\gamma_2]$ (see \cite[Proposition 7.1]{DisJarTon-95}). Then, by Theorem \ref{theo-1}, $T\circ f\circ h\in\Gamma^{\widehat{\B}}_2(\D,Y)$ with 
$$
\gamma^{\B}_2(T\circ f\circ h)=\gamma_2(T\circ S_f\circ\widehat{h})\leq \left\|T\right\|\gamma_2(S_f)||\widehat{h}||\leq\left\|T\right\|\gamma^{\B}_2(f).
$$ 


(I) Injectivity: For any $f\in\widehat{\B}(\D,X)$, any complex Banach space $Y$ and any metric injection $\iota\in\L(X,Y)$, we have $f\in\Gamma^{\widehat{\B}}_2(\D,X)$ with $\gamma^{\B}_2(f)=\gamma^{\B}_2(\iota\circ f)$ whenever $\iota\circ f\in\Gamma^{\widehat{\B}}_2(\D,Y)$.

\noindent\emph{Proof:} Assume $\iota\circ f\in\Gamma^{\widehat{\B}}_2(\D,Y)$. Then $\iota\circ S_f=S_{\iota\circ f}\in\Gamma_2(\G(\D),Y)$ by \cite[Proposition 5.13]{JimRui-22} and Theorem \ref{theo-1}. Since the Banach operator ideal $[\Gamma_2,\gamma_2]$ is injective by \cite[Proposition 7.3]{DisJarTon-95}, it follows that $S_f\in\Gamma_2(\G(\D),X)$ with $\gamma_2(S_f)=\gamma_2(\iota\circ S_f)$ or, equivalently, $f\in\Gamma^{\widehat{\B}}_2(\D,X)$ with $\gamma^{\B}_2(f)=\gamma^{\B}_2(\iota\circ f)$ again by Theorem \ref{theo-1}.  \\
 

(S) Surjectivity: For any $f\in\widehat{\B}(\D,X)$ and any holomorphic function $\pi\colon\D\to\D$ with $\pi(0)=0$ such that $\widehat{\pi}\in\L(\G(\D),\G(\D))$ is a metric surjection, we have $f\in\Gamma^{\widehat{\B}}_2(\D,X)$ with $\gamma^{\B}_2(f)=\gamma^{\B}_2(f\circ\pi)$ whenever $f\circ\pi\in\Gamma^{\widehat{\B}}_2(\D,X)$.

\noindent\emph{Proof:} Assume $f\circ\pi\in\Gamma^{\widehat{\B}}_2(\D,X)$. Since $S_f\circ\widehat{\pi}=S_{f\circ\pi}\in\Gamma_2(\G(\D),X)$ by \cite[Proposition 5.13]{JimRui-22} and Theorem \ref{theo-1}, and the Banach operator ideal $[\Gamma_2,\gamma_2]$ is surjective by \cite[Proposition 7.3]{DisJarTon-95}, we have $S_f\in\Gamma_2(\G(\D),X)$ with $\gamma_2(S_f)=\gamma_2(S_f\circ\widehat{\pi})$, hence $f\in\Gamma^{\widehat{\B}}_2(\D,X)$ with $\gamma^{\B}_2(f)=\gamma_2(S_f)$ and thus $\gamma^{\B}_2(f)=\gamma_2(S_{f\circ\pi})=\gamma^{\B}_2(f\circ\pi)$ by Theorem \ref{theo-1}.
\end{proof}

Let us recall (see \cite{CabJimRui-23}) that a mapping $f\in\H(\D,X)$ is said to be $2$-summing Bloch if there is a constant $c\geq 0$ such that for any $n\in\mathbb{N}$, $\lambda_1,\ldots,\lambda_n\in\mathbb{C}$ and $z_1,\ldots,z_n\in\D$, we have 
$$
\left(\sum_{i=1}^n\left|\lambda_i\right|^2\left\|f'(z_i)\right\|^2\right)^{\frac{1}{2}}\leq c
\sup_{g\in B_{\widehat{\B}(\D)}}\left(\sum_{i=1}^n\left|\lambda_i\right|^2\left|g'(z_i)\right|^2\right)^{\frac{1}{2}}.
$$
The infimum of such constants $c$, denoted by $\pi^{\B}_2(f)$, defines a complete norm on the linear space, denoted by $\Pi^{\widehat{\B}}_2(\D,X)$, consisting of all $2$-summing Bloch mappings $f\colon\D\to X$ for which $f(0)=0$.

If $f\in\Pi^{\widehat{\B}}_2(\D,X)$, Theorem 1.6 in \cite{CabJimRui-23} provides us a regular Borel probability measure $\mu$ on $(B_{\widehat{\B}(\D)},w^*)$, an operator $T\in\L(L_2(\mu),\ell_\infty(B_{X^*}))$ and a mapping $h\in\widehat{\B}(\D,L_\infty(\mu))$ such that
$$
\iota_X\circ f'=T\circ I_{\infty,2}\circ h'\colon \D \stackrel{h'}{\rightarrow}L_\infty(\mu)\stackrel{I_{\infty,2}}{\rightarrow}L_2(\mu)\stackrel{T}{\rightarrow}\ell_{\infty}(B_{X^*}),
$$
where $I_{\infty,2}\colon L_\infty(\mu)\to L_2(\mu)$ is the formal inclusion operator and $\iota_X\colon X\to \ell_\infty(B_{X^*})$ is the isometric linear embedding defined by $\left\langle \iota_X(x),x^*\right\rangle=x^*(x)$. Moreover, $\pi^{\B}_2(f)=\inf\left\{\left\|T\right\|\rho_\B(h)\right\}$, where the infimum is taken over all such factorizations of $\iota_X\circ f'$ as above.

Since $(\iota_X\circ f)'=\iota_X\circ f'=T\circ I_{\infty,2}\circ h'=(T\circ I_{\infty,2}\circ h)'$ and $(\iota_X\circ f)(0)=(T\circ I_{\infty,2}\circ h)(0)=0$, we deduce that $\iota_X\circ f=T\circ (I_{\infty,2}\circ h)$. Hence $\iota_X\circ f\in\Gamma_2^{\widehat{\B}}(\D,\ell_{\infty}(B_{X^*}))$ with $\gamma^{\B}_2(\iota_X\circ f)\leq\left\|T\right\|\rho_\B(h)$. By the injectivity of the Banach ideal $[\Gamma^{\widehat{\B}}_2,\gamma^{\B}_2]$, it follows that $f\in\Gamma^{\widehat{\B}}_2(\D,X)$ with $\gamma^{\B}_2(f)=\gamma^{\B}_2(\iota_X\circ f)$. Moreover, $\gamma^{\B}_2(\iota_X\circ f)\leq\pi_2^{\B}(f)$ by taking the infimum over all factorizations of $\iota_X\circ f'$ as above. Thus, we have proved the following. 

\begin{corollary}
Every $2$-summing Bloch mapping $f\colon\D\to X$ with $f(0)=0$ factors through a Hilbert space and $\gamma_2^{\B}(f)\leq\pi_2^{\B}(f)$.$\hfill\qed$
\end{corollary}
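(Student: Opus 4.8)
The plan is to derive the corollary by combining the Hilbert-space factorization of $2$-summing Bloch mappings from Theorem 1.6 of \cite{CabJimRui-23} with the injectivity of the ideal $[\Gamma^{\widehat{\B}}_2,\gamma^{\B}_2]$ proved in Proposition \ref{prop-10}. Given $f\in\Pi^{\widehat{\B}}_2(\D,X)$ with $f(0)=0$, I would first invoke that factorization theorem to obtain a regular Borel probability measure $\mu$, an operator $T\in\L(L_2(\mu),\ell_\infty(B_{X^*}))$ and a mapping $h\in\widehat{\B}(\D,L_\infty(\mu))$ satisfying $\iota_X\circ f'=T\circ I_{\infty,2}\circ h'$, where $I_{\infty,2}\colon L_\infty(\mu)\to L_2(\mu)$ is the formal inclusion and $\iota_X\colon X\to\ell_\infty(B_{X^*})$ the canonical isometric embedding, with $\pi^{\B}_2(f)=\inf\{\|T\|\rho_\B(h)\}$ over all such factorizations.

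The core of the argument is to upgrade this factorization of derivatives into a genuine factorization of mappings through the Hilbert space $L_2(\mu)$. I would put $g:=I_{\infty,2}\circ h$ and check that $g\in\widehat{\B}(\D,L_2(\mu))$ with $g'=I_{\infty,2}\circ h'$ and $\rho_\B(g)\leq\rho_\B(h)$, using that $I_{\infty,2}$ has norm at most one because $\mu$ is a probability measure. The maps $\iota_X\circ f$ and $T\circ g$ are then both holomorphic on the connected domain $\D$, both vanish at $0$, and have the same derivative $\iota_X\circ f'=T\circ g'=(T\circ g)'$; hence they coincide, giving $\iota_X\circ f=T\circ g$ with $g\in\widehat{\B}(\D,L_2(\mu))$. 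Thus $\iota_X\circ f\in\Gamma^{\widehat{\B}}_2(\D,\ell_\infty(B_{X^*}))$ with $\gamma^{\B}_2(\iota_X\circ f)\leq\|T\|\rho_\B(g)\leq\|T\|\rho_\B(h)$, and passing to the infimum over all admissible factorizations yields $\gamma^{\B}_2(\iota_X\circ f)\leq\pi^{\B}_2(f)$.

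It remains to remove the embedding $\iota_X$. Since $\iota_X$ is a metric injection and $\iota_X\circ f$ already factors through a Hilbert space, the injectivity of $[\Gamma^{\widehat{\B}}_2,\gamma^{\B}_2]$ (Proposition \ref{prop-10}) gives $f\in\Gamma^{\widehat{\B}}_2(\D,X)$ with $\gamma^{\B}_2(f)=\gamma^{\B}_2(\iota_X\circ f)\leq\pi^{\B}_2(f)$, which is the claim. I expect the only delicate point to be the passage from derivatives to mappings: one must confirm that composing $h$ with $I_{\infty,2}$ keeps the mapping Bloch and sends it into a Hilbert space, and that equality of derivatives together with the normalization $f(0)=0$ promotes to equality of the mappings. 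The remaining steps are norm bookkeeping and a direct appeal to the already-established injectivity of the ideal.
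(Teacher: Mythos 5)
Your proposal is correct and follows essentially the same route as the paper: invoke the Pietsch-type factorization $\iota_X\circ f'=T\circ I_{\infty,2}\circ h'$ from Theorem 1.6 of \cite{CabJimRui-23}, promote the equality of derivatives (together with the common value $0$ at the origin) to $\iota_X\circ f=T\circ(I_{\infty,2}\circ h)$, and then strip off $\iota_X$ via the injectivity of $[\Gamma^{\widehat{\B}}_2,\gamma^{\B}_2]$. Your explicit check that $g:=I_{\infty,2}\circ h$ is Bloch with $\rho_\B(g)\leq\rho_\B(h)$ is a detail the paper leaves implicit, but the argument is the same.
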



\section{Bloch domination}\label{4}

We first introduce a Bloch subordination/domination criterion of finite sequences of elements of $\C\times\D$ (compare with \cite[Definition V.3]{LiQue-18}). 

\begin{definition}
Let $n,m\in\N$, $(\lambda_i,z_i)_{i=1}^n\in(\C\times\D)^n$ and $(\mu_j,w_j)_{j=1}^m\in(\C\times\D)^m$. We say that $(\lambda_i,z_i)_{i=1}^n$ is Bloch subordinated to $(\mu_j,w_j)_{j=1}^m$ or, equivalently, Bloch dominated by $(\mu_j,w_j)_{j=1}^m$, and we write $(\lambda_i,z_i)_{i=1}^n\prec (\mu_j,w_j)_{j=1}^m$, whenever 
$$
\sum_{i=1}^n\left|\lambda_i\right|^2\left|g'(z_i)\right|^2\leq \sum_{j=1}^m\left|\mu_j\right|^2\left|g'(w_j)\right|^2
$$
for all $g\in\widehat{\B}(\D)$. 
\end{definition}

The following characterization of this criterion will be useful to us later.

\begin{theorem}\label{prop-1}
Let $n,m\in\N$, $(\lambda_i,z_i)_{i=1}^n\in(\C\times\D)^n$ and $(\mu_j,w_j)_{j=1}^m\in(\C\times\D)^m$. The following assertions are equivalent:
\begin{enumerate}
	\item $(\lambda_i,z_i)_{i=1}^n$ is Bloch dominated by $(\mu_j,w_j)_{j=1}^m$.
	\item There exists an operator $T\in\L(\ell_2^m,\ell_2^n)$ with $\left\|T\right\|\leq 1$, represented by a $n\times m$ complex matrix $(a_{ij})$, such that $\lambda_i\gamma_{z_i}=\sum_{j=1}^m a_{ij}\mu_j\gamma_{w_j}$ for every $1\leq i\leq n$.
\end{enumerate}

\end{theorem}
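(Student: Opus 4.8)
The plan is to recast both assertions as statements about two linear maps into finite-dimensional Hilbert spaces and then to invoke a Douglas-type majorization-factorization argument. Recall from Theorem \ref{main-theo} that $\widehat{\B}(\D)$ is the dual of $\G(\D)$ under the pairing $\langle g,\gamma_z\rangle=g'(z)$, so that $\langle g,\lambda_i\gamma_{z_i}\rangle=\lambda_i g'(z_i)$ and $\langle g,\mu_j\gamma_{w_j}\rangle=\mu_j g'(w_j)$ for every $g\in\widehat{\B}(\D)$. I would introduce the two bounded linear maps
$$
A(g)=(\lambda_i g'(z_i))_{i=1}^n\in\ell_2^n,\qquad B(g)=(\mu_j g'(w_j))_{j=1}^m\in\ell_2^m\qquad(g\in\widehat{\B}(\D)),
$$
which are well defined and bounded since $|g'(z)|\leq\rho_\B(g)/(1-|z|^2)$ by Theorem \ref{main-theo}. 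With this dictionary, assertion $(i)$ is exactly the majorization $\|A(g)\|_{\ell_2^n}\leq\|B(g)\|_{\ell_2^m}$ for all $g\in\widehat{\B}(\D)$, while assertion $(ii)$ amounts to the factorization $A=T\circ B$ for a contraction $T=(a_{ij})\in\L(\ell_2^m,\ell_2^n)$.

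For the routine implication $(ii)\Rightarrow(i)$ I would simply pair the identity $\lambda_i\gamma_{z_i}=\sum_{j=1}^m a_{ij}\mu_j\gamma_{w_j}$ against an arbitrary $g$, obtaining $A(g)=T(B(g))$ and hence $\|A(g)\|\leq\|T\|\,\|B(g)\|\leq\|B(g)\|$.

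The substantive direction is $(i)\Rightarrow(ii)$, where I would construct $T$ by hand on the range of $B$. Since $\mathrm{ran}(B)$ is a finite-dimensional subspace of $\ell_2^m$, I define $T_0\colon\mathrm{ran}(B)\to\ell_2^n$ by $T_0(B(g))=A(g)$. The majorization $(i)$ makes this both well defined and contractive at a single stroke: if $B(g)=B(g')$ then $B(g-g')=0$, so $\|A(g-g')\|\leq\|B(g-g')\|=0$ forces $A(g)=A(g')$, and moreover $\|T_0(B(g))\|=\|A(g)\|\leq\|B(g)\|$. The map $T_0$ is manifestly complex-linear. I would then extend it to all of $\ell_2^m$ by precomposing with the orthogonal projection $P$ onto $\mathrm{ran}(B)$, setting $T=T_0\circ P$; this preserves linearity and the bound $\|T\|\leq1$ and still satisfies $A=T\circ B$. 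Writing $T$ as its $n\times m$ matrix $(a_{ij})$, the identity $A=T\circ B$ reads $\lambda_i g'(z_i)=\sum_{j=1}^m a_{ij}\mu_j g'(w_j)$ for every $g$, that is,
$$
\Big\langle g,\,\lambda_i\gamma_{z_i}-\sum_{j=1}^m a_{ij}\mu_j\gamma_{w_j}\Big\rangle=0\qquad(g\in\widehat{\B}(\D)=\G(\D)^*).
$$
As $\G(\D)^*$ separates the points of $\G(\D)$ and the bracketed element lies in $\G(\D)$, it must vanish, yielding the desired representation.

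I expect the only genuine subtlety to lie in the bookkeeping of the duality: using the pairing $\langle g,\gamma_z\rangle=g'(z)$ consistently so that $(i)$ and $(ii)$ collapse to the clean Hilbert-space statements $\|Ag\|\leq\|Bg\|$ and $A=TB$, and then passing from scalar identities valid for all $g$ back to an honest equality of Bloch molecules in $\G(\D)$ via the point-separation of $\G(\D)^*$. The factorization step itself is elementary and finite-dimensional, needing nothing beyond the orthogonal projection onto a subspace of $\ell_2^m$.
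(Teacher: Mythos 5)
Your proposal is correct and follows essentially the same route as the paper: both define the map $(\mu_j g'(w_j))_j\mapsto(\lambda_i g'(z_i))_i$ on the range of $B$ inside $\ell_2^m$, observe that the domination inequality gives well-definedness and contractivity simultaneously, extend to all of $\ell_2^m$ without increasing the norm (you via the orthogonal projection, the paper via an appeal to a Hahn--Banach-type extension, which amounts to the same thing here), and then recover the identity of Bloch molecules by evaluating against every $g\in\widehat{\B}(\D)$. The converse implication is handled identically in both arguments.
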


\begin{proof}
$(i)\Rightarrow(ii)$: We set
$$
S=\left\{(\mu_jg'(w_j))_{j=1}^m\colon g\in\widehat{\B}(\D)\right\}\subseteq\ell_2^m.
$$
Assume $(\lambda_i,z_i)_{i=1}^n\prec (\mu_j,w_j)_{j=1}^m$. Then the mapping $T_0\colon S\to\ell_2^n$ given by 
$$
T_0\left((\mu_jg'(w_j))_{j=1}^m\right)=(\lambda_ig'(z_i))_{i=1}^n,
$$
is well-defined. Clearly, $T_0$ is linear and since
\begin{align*}
\left\|T_0\left((\mu_jg'(w_j))_{j=1}^m\right)\right\|_{\ell_2^n}
&=\left\|(\lambda_ig'(z_i))_{i=1}^n\right\|_{\ell_2^n}
=\left(\sum_{i=1}^n\left|\lambda_i\right|^2\left|g'(z_i)\right|^2\right)^{\frac{1}{2}}\\
&\leq\left(\sum_{j=1}^m\left|\mu_j\right|^2\left|g'(w_j)\right|^2\right)^{\frac{1}{2}}
=\left\|(\mu_jg'(w_j))_{j=1}^m\right\|_{\ell_2^m}
\end{align*}
for all $(\mu_jg'(w_j))_{j=1}^m\in S$, then $T_0$ is continuous with $\left\|T_0\right\|\leq 1$. By Hahn--Banach Theorem, there exists $T\in\L(\ell_2^m,\ell_2^n)$ with $\left\|T\right\|=\left\|T_0\right\|$ such that $\left.T\right|_S=T_0$. Let $(a_{ij})$ be the representation matrix of $T$ with respect to the canonical bases of $\ell_2^m$ and $\ell_2^n$, that is, 
$$
T((t_j)_{j=1}^m)=\left(\sum_{j=1}^m a_{1j}t_j,\ldots,\sum_{j=1}^m a_{nj}t_j\right)^t
$$
for all $(t_j)_{j=1}^m\in\ell_2^m$. Clearly, 
for each $1\leq i\leq n$, we have 
$$
\lambda_i\gamma_{z_i}(g)=\lambda_ig'(z_i)=\sum_{j=1}^m a_{ij}\mu_jg'(w_j)=\sum_{j=1}^m a_{ij}\mu_j\gamma_{w_j}(g)
$$
for all $g\in\widehat{\B}(\D)$, and thus $\lambda_i\gamma_{z_i}=\sum_{j=1}^m a_{ij}\mu_j\gamma_{w_j}$, as required. 

$(ii)\Rightarrow(i)$: Given $g\in\widehat{\B}(\D)$, we have $(\lambda_i,g'(z_i))_{i=1}^n=T((\mu_jg'(w_j))_{j=1}^m)$, hence
\begin{align*}
\sum_{i=1}^n\left|\lambda_i\right|^2\left|g'(z_i)\right|^2
&=\left\|(\lambda_i,g'(z_i))_{i=1}^n\right\|^2_{\ell_2^n}
\leq \left\|T\right\|^2\left\|(\mu_jg'(w_j))_{j=1}^m\right\|^2_{\ell_2^m}\\
&\leq \left\|(\mu_jg'(w_j))_{j=1}^m\right\|^2_{\ell_2^m}
=\sum_{j=1}^m\left|\mu_j\right|^2\left|g'(w_j)\right|^2.
\end{align*}
\end{proof}

The following result is a Bloch version of a classical Kwapie\'n's theorem \cite{Kwa-72} by means of Bloch dominated finite sequences of elements of $\C\times\D$. 

As usual, $B_X$ denotes the closed unit ball of a Banach space $X$. Given a Hausdorff compact topological space $T$, $\Cu(T,\mathbb{K})$ stands for the Banach space of all scalar-valued continuous functions on $T$, endowed with the supremum norm.  

\begin{theorem}\label{theo-8}
Let $X$ be a complex Banach space and $f\in\H(\D,X)$. The following statements are equivalent:
\begin{enumerate}
	\item $f$ factors through a Hilbert space.
	\item There exists a constant $c\geq 0$ such that 
$$
\sum_{i=1}^n\left|\lambda_i\right|^2\left\|f'(z_i)\right\|^2\leq c^2\sum_{j=1}^m\frac{\left|\mu_j\right|^2}{(1-|w_j|^2)^2},
$$
whenever $(\lambda_i,z_i)_{i=1}^n\prec (\mu_j,w_j)_{j=1}^m$ with $n,m\in\N$, $(\lambda_i,z_i)_{i=1}^n\in(\C\times\D)^n$ and $(\mu_j,w_j)_{j=1}^m\in(\C\times\D)^m$. 
\end{enumerate}
In this case, $\gamma^{\B}_2(f)$ is the minimum of all constants $c$ satisfying the preceding inequality. 
\end{theorem}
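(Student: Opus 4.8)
The plan is to transport everything to the linearization $S_f\in\L(\G(\D),X)$ and then read the classical Kwapie\'n theorem (as in \cite[Chapter V]{LiQue-18} or \cite[Theorem 7.6]{DisJarTon-95}) through the dictionary furnished by Theorem \ref{main-theo}. By Theorem \ref{theo-1}, statement (i) is equivalent to $S_f\in\Gamma_2(\G(\D),X)$ with $\gamma^{\B}_2(f)=\gamma_2(S_f)$; moreover $S_f\gamma_z=f'(z)$, the duality $\G(\D)^*=\widehat{\B}(\D)$ satisfies $\langle g,\gamma_z\rangle=g'(z)$, and $\|\gamma_z\|=1/(1-|z|^2)$. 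Under this correspondence a scaled atom $\lambda_i\gamma_{z_i}$ has image $\lambda_if'(z_i)$ and norm $|\lambda_i|/(1-|z_i|^2)$, while $(\lambda_i,z_i)_{i=1}^n\prec(\mu_j,w_j)_{j=1}^m$ is precisely the classical subordination $\sum_i|\langle g,\lambda_i\gamma_{z_i}\rangle|^2\le\sum_j|\langle g,\mu_j\gamma_{w_j}\rangle|^2$ for all $g\in\widehat{\B}(\D)$. Thus (ii) is exactly the Kwapie\'n estimate for $S_f$ \emph{restricted to finite families of scaled atoms}, and the theorem reduces to showing that testing this estimate on atoms already detects membership in $\Gamma_2(\G(\D),X)$, with matching optimal constants.

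For (i)$\Rightarrow$(ii) I would argue directly. Writing $f=T\circ g$ with $g\in\widehat{\B}(\D,H)$ and $T\in\L(H,X)$, Theorem \ref{prop-1} converts a subordination $(\lambda_i,z_i)_{i=1}^n\prec(\mu_j,w_j)_{j=1}^m$ into a contraction $A=(a_{ij})\in\L(\ell_2^m,\ell_2^n)$ with $\lambda_i\gamma_{z_i}=\sum_j a_{ij}\mu_j\gamma_{w_j}$. Applying $S_g$ yields $\lambda_ig'(z_i)=\sum_j a_{ij}\mu_jg'(w_j)$ in $H$, i.e. $(\lambda_ig'(z_i))_i$ is the image of $(\mu_jg'(w_j))_j$ under the ampliation $A\otimes\id_H$ on $\ell_2(H)$. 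Because $H$ is a Hilbert space, $A\otimes\id_H$ is again a contraction, so $\sum_i|\lambda_i|^2\|g'(z_i)\|^2\le\sum_j|\mu_j|^2\|g'(w_j)\|^2\le\rho_\B(g)^2\sum_j|\mu_j|^2/(1-|w_j|^2)^2$; composing with $T$ and infimizing over factorizations gives (ii) with optimal constant at most $\gamma^{\B}_2(f)$.

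For (ii)$\Rightarrow$(i) I would invoke Kwapie\'n's theorem for $S_f$: it suffices to establish $\sum_i\|S_f\phi_i\|^2\le c^2\sum_j\|\psi_j\|^2$ for \emph{all} finite subordinated families $(\phi_i)\prec(\psi_j)$ in $\G(\D)$, whence $\gamma_2(S_f)\le c$ and Theorem \ref{theo-1} closes the argument, the two directions then forcing $\gamma^{\B}_2(f)=\min c$. The main obstacle is precisely the passage from the atomic hypothesis (ii) to this estimate for arbitrary molecules. By density of $\lin(\Gamma(\D))$ in $\G(\D)$, relaxing the dominating side by a factor $1+\varepsilon$ to keep subordination stable under approximation, one reduces to molecules; the molecular analogue of Theorem \ref{prop-1} then writes $(\phi_i)\prec(\psi_j)$ as $\phi_i=\sum_j a_{ij}\psi_j$ with $\|A\|\le1$. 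Here the mechanism of the previous paragraph collapses, since $A\otimes\id_X$ need not be contractive on $\ell_2(X)$ for a mere Banach space $X$ — this failure is exactly the content of Kwapie\'n's theorem. I would therefore carry out Kwapie\'n's Hahn--Banach separation, constructing a Hilbertian seminorm on $\G(\D)$ that dominates $\|S_f(\cdot)\|$ and is dominated by $\|\cdot\|_{\G(\D)}$, and completing to obtain the factorization. The crux, which I expect to be the hardest point, is to verify that the convex cone generated by the \emph{atomic} data in (ii) is rich enough to run this separation against the off-diagonal (molecular) quantities $\|\sum_s\lambda_s f'(p_s)\|$; this is where the special structure of the atom family $\{\gamma_z\}$ — its holomorphic parametrization and the abundance of subordination relations among the $\gamma_z$ — must be exploited, rather than any generic spanning property of $\G(\D)$.
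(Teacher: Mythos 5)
Your direction (i)$\Rightarrow$(ii) is correct and takes a genuinely different route from the paper's: you convert the subordination into a contraction $A=(a_{ij})$ via Theorem \ref{prop-1}, apply $S_g$ to get $\lambda_i g'(z_i)=\sum_j a_{ij}\mu_j g'(w_j)$ in $H$, and use that the ampliation $A\otimes\id_H$ is still contractive on $\ell_2^m(H)$ because $H$ is Hilbertian. The paper instead tests the subordination against the scalar Bloch functions $h_v(z)=\innerproduct{g(z)}{v}$ for $v\in H$ and recovers $\sum_i|\lambda_i|^2\|g'(z_i)\|^2\leq\sum_j|\mu_j|^2\|g'(w_j)\|^2$ by Parseval over an orthonormal basis. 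Both arguments are valid and yield the same constant.

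The genuine gap is in (ii)$\Rightarrow$(i), and you have named it yourself without closing it: your proposal ends by declaring that ``the crux, which I expect to be the hardest point,'' is to check that the atomic data in (ii) suffices to run a Kwapie\'n-type separation. That crux \emph{is} the implication; leaving it as an expectation leaves the proof unfinished. The reduction you sketch beforehand also does not work as stated: hypothesis (ii) only controls families whose members are single scaled atoms $\lambda_i\gamma_{z_i}$, whereas feeding the classical Kwapie\'n theorem to $S_f$ requires the domination for arbitrary subordinated families of \emph{molecules} in $\G(\D)$, and neither the density of $\lin(\Gamma(\D))$ nor a ``$1+\varepsilon$ relaxation'' explains why subordination and the estimate survive that passage. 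The paper never routes through the classical theorem for $S_f$. It separates, inside $\Cu(B_{\widehat{\B}(\D)},\R)$, the convex set of functions $\Phi=\sum_i|\lambda_i|^2|\eta_{z_i}|^2-\sum_j|\mu_j|^2|\eta_{w_j}|^2$ (where $\eta_z(k)=k'(z)$) built from data \emph{violating} the inequality in (ii), from the open convex cone of functions with strictly negative supremum; the Riesz representation of the separating functional gives a positive measure $\nu$ on $B_{\widehat{\B}(\D)}$, suitably normalized, and the factorization is then explicit: the Hilbert space is $L_2(\widehat{\nu})$, the Bloch factor is the primitive $g$ of $z\mapsto\eta_z$ with $\rho_\B(g)\leq c$, and the operator is $T(\eta_z)=f'(z)$ on $\overline{\lin}\{\eta_z\colon z\in\D\}$, whose boundedness is extracted from the integral inequality $|\lambda|^2\|f'(z)\|^2\leq\int|\lambda|^2|k'(z)|^2\,d\widehat{\nu}(k)$ that the separation delivers. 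To complete your argument you would have to either carry out this separation against the atomic data directly, or prove that the atomic estimate self-improves to the molecular one; neither is done in the proposal.
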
 

\begin{proof}
$(i)\Rightarrow(ii)$: Assume $f=T\circ g$, where $g\in\widehat{\B}(\D,H)$ and $T\in\L(H,X)$ for some Hilbert space $H$ with inner product $\innerproduct{\cdot}{\cdot}$.  
Let $n,m\in\N$, $(\lambda_i,z_i)_{i=1}^n\in(\C\times\D)^n$ and $(\mu_j,w_j)_{j=1}^m\in(\C\times\D)^m$ such that $(\lambda_i,z_i)_{i=1}^n\prec (\mu_j,w_j)_{j=1}^m$. We claim that for each $v\in H$, the function $h_v\colon z\mapsto \innerproduct{g(z)}{v}$ from $\D$ into $\C$ belongs to $\widehat{\B}(\D)$. Indeed, $h_v(0)=\innerproduct{g(0)}{v}=\innerproduct{0}{v}=0$ and $h_v$ is Bloch with $\rho_\B(h_v)\leq \rho_\B(g)\left\|v\right\|$ since
$$
(1-|z|^2)\left|h_v'(z)\right|=(1-|z|^2)\left|\innerproduct{g'(z)}{v}\right|\leq (1-|z|^2)\left\|g'(z)\right\|\left\|v\right\|\leq \rho_\B(g)\left\|v\right\|
$$
for all $z\in\D$. Consequently, we have 
$$
\sum_{i=1}^n\left|\lambda_i\right|^2\left|\innerproduct{g'(z_i)}{v}\right|^2=\sum_{i=1}^n\left|\lambda_i\right|^2\left|h_v'(z_i)\right|^2
\leq \sum_{j=1}^m \left|\mu_j\right|^2\left|h_v'(w_j)\right|^2=\sum_{j=1}^m\left|\mu_j\right|^2\left|\innerproduct{g'(w_j)}{v}\right|^2
$$
for all $v\in H$. Let $(v_\alpha)_{\alpha\in A}$ be an orthonormal basis of $H$. Since 
$$
\left\|v\right\|^2=\sum_{\alpha\in A}\left|\innerproduct{v}{v_\alpha}\right|^2
$$
for all $v\in H$, we deduce  
$$
\sum_{i=1}^n\left|\lambda_i\right|^2\left\|g'(z_i)\right\|^2\leq\sum_{j=1}^m \left|\mu_j\right|^2\left\|g'(w_j)\right\|^2.
$$
%
Therefore we have 
\begin{align*}
\sum_{i=1}^n\left|\lambda_i\right|^2\left\|f'(z_i)\right\|^2&=\sum_{i=1}^n\left|\lambda_i\right|^2\left\|T(g'(z_i))\right\|^2\\
&\leq\left\|T\right\|^2\sum_{i=1}^n\left|\lambda_i\right|^2\left\|g'(z_i)\right\|^2\leq\left\|T\right\|^2\sum_{j=1}^m\left|\mu_j\right|^2\left\|g'(w_j)\right\|^2\\
&\leq\left\|T\right\|^2\rho_{\B}(g)^2\sum_{j=1}^m\frac{\left|\mu_j\right|^2}{(1-|w_j|^2)^2}.
\end{align*}
And taking the infimum over all factorizations of $f'$, we deduce  
$$
\sum_{i=1}^n\left|\lambda_i\right|^2\left\|f'(z_i)\right\|^2\leq\gamma^{\B}_2(f)\sum_{j=1}^m\frac{\left|\mu_j\right|^2}{(1-|w_j|^2)^2}.
$$

$(ii)\Rightarrow(i)$: Assume that $(ii)$ holds. From Theorem \ref{main-theo}, we can consider the Hausdorff compact space $(B_{\widehat{\B}(\D)},w^*)$. For each $z\in\D$, define $\eta_z\colon B_{\widehat{\B}(\D)}\to\C$ by $\eta_z(k)=k'(z)$ for all $k\in B_{\widehat{\B}(\D)}$. Clearly, $\eta_z\in\Cu(B_{\widehat{\B}(\D)},\C)$ with $\left\|\eta_z\right\|_\infty=1/(1-|z|^2)$. Consider the set $K_1\subseteq\Cu(B_{\widehat{\B}(\D)},\R)$ formed by all functions of the form:
$$
\Phi=\sum_{i=1}^n\left|\lambda_i\right|^2\left|\eta_{z_i}\right|^2-\sum_{j=1}^m\left|\mu_j\right|^2\left|\eta_{w_j}\right|^2,
$$
where $(\lambda_i,z_i)_{i=1}^n\in(\C\times\D)^n$ and $(\mu_j,w_j)_{j=1}^m\in(\C\times\D)^m$ satisfy  
$$
\sum_{i=1}^n\left|\lambda_i\right|^2\left\|f'(z_i)\right\|^2>c^2\sum_{j=1}^m\frac{\left|\mu_j\right|^2}{(1-|w_j|^2)^2}
$$
By concatenation, it is easy to see that $K_1$ is convex. Condition in the statement (ii) assures that $\sup\left\{\Phi(k)\colon k\in B_{\widehat{\B}(\D)}\right\}>0$ for each $\Phi\in K_1$. Hence $K_1\cap K_2=\emptyset$, where 
$$
K_2=\left\{\Psi\in\Cu(B_{\widehat{\B}(\D)},\R) \colon \sup\left\{\Psi(k)\colon k\in B_{\widehat{\B}(\D)}\right\}<0\right\}.
$$
Clearly, $K_2$ is an open convex subset of $\Cu(B_{\widehat{\B}(\D)},\R)$. Therefore, the Hahn--Banach Separation Theorem and the Riesz Representation Theorem provide a non-zero signed Borel measure $\nu$ on $B_{\widehat{\B}(\D)}$ and a real number $\alpha$ such that
$$
\int_{B_{\widehat{\B}(\D)}}\Psi(k)\ d\nu(k)<\alpha\leq\int_{B_{\widehat{\B}(\D)}}\Phi(k)\ d\nu(k)\qquad (\Psi\in K_2,\; \Phi\in K_1).
$$
Since $r\Psi\in K_2$ for all $r>0$, it follows that $\alpha\geq 0$. By the definition of $K_2$, we obtain  
$$
\int_{B_{\widehat{\B}(\D)}}\Psi(k)\ d\nu(k)\leq 0\qquad (\Psi\in K_2).
$$
In particular,  
$$
\nu(B_{\widehat{\B}(\D)})=\int_{B_{\widehat{\B}(\D)}}\ d\nu(k)\geq 0,
$$
(in fact, $\nu(B_{\widehat{\B}(\D)})>0$ since $\nu\neq 0$), and therefore $\nu$ is a positive measure. Moreover, we have
$$
\int_{B_{\widehat{\B}(\D)}}\Phi(k)\ d\nu(k)\geq 0\qquad (\Phi\in K_1).
$$

Now, note that if $(\mu,w)\in\C\times\D$ and $\left|\mu\right|/(1-|w|^2)\leq 1$, we have  
$$
\int_{B_{\widehat{\B}(\D)}}\left|\mu\right|^2\left|k'(w)\right|^2\ d\nu(k)\leq\nu(B_{\widehat{\B}(\D)}),
$$
and therefore there exists 
$$
a:=\sup\left\{\left(\int_{B_{\widehat{\B}(\D)}}\left|\mu\right|^2\left|k'(w)\right|^2\ d\nu(k)\right)^{\frac{1}{2}}\colon (\mu,w)\in \C\times\D,\, \frac{\left|\mu\right|}{1-|w|^2}\leq 1\right\}.
$$
Furthermore, $a>0$. Otherwise, for all $(\mu,w)\in\C\times\D$ such that $\left|\mu\right|/(1-|w|^2)\leq 1$, we would have  
$$
\left\|\mu\eta_w\right\|_{L_2(\nu)}=\left(\int_{B_{\widehat{\B}(\D)}}\left|\mu\right|^2\left|k'(w)\right|^2\ d\nu(k)\right)^{\frac{1}{2}}=0,
$$
that is, $\mu\eta_w=0$. In particular, $(1-|w|^2)\eta_w=0$ or, equivalently, $\eta_w=0$, but $\eta_w(f_w)=f'_w(w)=1/(1-|w|^2)\neq 0$, a contradiction. Taking $\widehat{\nu}:=(c^2/a^2)\nu$ instead of $\nu$, we obtain   
$$
\sup\left\{\left(\int_{B_{\widehat{\B}(\D)}}\left|\mu\right|^2\left|k'(w)\right|^2\ d\widehat{\nu}(k)\right)^{\frac{1}{2}}\colon (\mu,w)\in \C\times\D,\, \frac{\left|\mu\right|}{1-|w|^2}\leq 1\right\}=c.
$$

Define $h\colon\D\to L_2(\widehat{\nu})$ by $h(z)=j_2(\eta_z)$ for all $z\in\D$, where $j_2\colon\Cu(B_{\widehat{\B}(\D)},\C)\to L_2(\widehat{\nu})$ is the formal inclusion operator. Note that $h\in\H(\D,L_2(\widehat{\nu}))$. 
By \cite[Lemma 2.9]{JimRui-22}, there exists a mapping $g\in\H(\D,L_2(\widehat{\nu}))$ with $g(0)=0$ such that $g'=h$. In fact, $g\in\widehat{\B}(\D,L_2(\widehat{\nu}))$ with $\rho_\B(g)\leq c$ since 
$$
(1-|z|^2)\left\|g'(z)\right\|_{L_2(\widehat{\nu})}=(1-|z|^2)\left\|h(z)\right\|_{L_2(\widehat{\nu})}=(1-|z|^2)\left(\int_{B_{\widehat{\B}(\D)}}\left|k'(z)\right|^2\  d\widehat{\nu}(k)\right)^{\frac{1}{2}}\leq c
$$
for all $z\in\D$. Consider now the Hilbert space $H:=\overline{\lin}(h(\D))\subseteq L_2(\widehat{\nu})$ and the linear operator $T\colon H\to X$ defined by $T(j_2(\eta_z))=f'(z)$ for all $z\in\D$. We will now show that $T$ is continuous with $\left\|T\right\|\leq 1$. First, we claim that 
$$
(\lambda,z)\in \C\times\D,\; \left|\lambda\right|^2\left\|f'(z)\right\|^2>c^2\quad \Rightarrow\quad \int_{B_{\widehat{\B}(\D)}}\left|\lambda\right|^2\left|k'(z)\right|^2\ d\widehat{\nu}(k)\geq c^2.
$$
To see this, if $(\mu,w)\in \C\times\D$ satisfies that $\left|\mu\right|/(1-|w|^2)\leq 1$, then we have  
$$
\left|\lambda\right|^2\left\|f'(z)\right\|^2>c^2\frac{\left|\mu\right|^2}{(1-|w|^2)^2},
$$ 
hence the property of integration of the functions $\Phi\in K_1$ yields 
$$
\int_{B_{\widehat{\B}(\D)}}\left|\lambda\right|^2\left|k'(z)\right|^2\ d\widehat{\nu}(k)\geq \int_{B_{\widehat{\B}(\D)}}\left|\mu\right|^2\left|k'(w)\right|^2\ d\widehat{\nu}(k),
$$
and taking the supremum over all such $(\mu,w)\in \C\times\D$, we conclude that 
$$
\int_{B_{\widehat{\B}(\D)}}\left|\lambda\right|^2\left|k'(z)\right|^2\ d\widehat{\nu}(k)\geq c^2.
$$
Our claim implies that 
$$
\left|\lambda\right|^2\left\|f'(z)\right\|^2\leq \int_{B_{\widehat{\B}(\D)}}\left|\lambda\right|^2\left|k'(z)\right|^2\ d\widehat{\nu}(k). 
$$
Indeed, if $(\lambda,z)\in\C\times\D$, $b:=\left|\lambda\right|^2\left\|f'(z)\right\|^2$ and $\varepsilon>0$, then 
$$
\left|\lambda\right|^2\frac{c^2+\varepsilon}{b}\left\|f'(z)\right\|^2=c^2+\varepsilon>c^2.
$$
Our claim yields
$$
\int_{B_{\widehat{\B}(\D)}}\left|\lambda\right|^2\frac{c^2+\varepsilon}{b}\left|k'(z)\right|^2\ d\widehat{\nu}(k)\geq c^2,
$$ 
that is, 
$$
\int_{B_{\widehat{\B}(\D)}}\left|\lambda\right|^2\left|k'(z)\right|^2\ d\widehat{\nu}(k)\geq \frac{c^2}{c^2+\varepsilon}b,
$$
and since $\varepsilon$ is arbitrary, we conclude that 
$$
\int_{B_{\widehat{\B}(\D)}}\left|\lambda\right|^2\left|k'(z)\right|^2\ d\widehat{\nu}(k)\geq b.
$$
We can adjust $\widehat{\nu}$ so that $\widehat{\nu}(B_{\widehat{\B}(\D)})=1$. Given $n\in\mathbb{N}$, $\alpha_1,\ldots,\alpha_n\in\mathbb{C}^*$ and $z_1,\ldots,z_n\in\D$, we have  
\begin{align*}
\left\|T\left(\sum_{i=1}^n\alpha_ij_2(\eta_{z_i})\right)\right\|&=\left\|\sum_{i=1}^n\alpha_iT(j_2(\eta_{z_i}))\right\|=\left\|\sum_{i=1}^n\alpha_if'(z_i)\right\|\\
&\leq\sum_{i=1}^n\left|\alpha_i\right|\left\|f'(z_i)\right\|\leq \sum_{i=1}^n\left|\alpha_i\right|\left(\int_{B_{\widehat{\B}(\D)}}\left|k'(z)\right|^2\ d\widehat{\nu}(k)\right)^{\frac{1}{2}}\\
&\leq\sum_{i=1}^n\frac{\left|\alpha_i\right|}{1-|z_i|^2}=\left|\sum_{i=1}^n\alpha_i\left(\frac{\overline{\alpha_i}}{\left|\alpha_i\right|}f_{z_i}\right)'(z_i)\right|\\
&=\sup_{k\in B_{\widehat{\B}(\D)}}\left|\sum_{i=1}^n\alpha_i k'(z_i)\right|=\sup_{k\in B_{\widehat{\B}(\D)}}\left|\sum_{i=1}^n\alpha_i \eta_{z_i}(k)\right|=\left\|\sum_{i=1}^n\alpha_i\eta_{z_i}\right\|_\infty\\
&=\left\|j_2\left(\sum_{i=1}^n\alpha_i\eta_{z_i}\right)\right\|_{L_2(\widehat{\nu})}=\left\|\sum_{i=1}^n\alpha_ij_2(\eta_{z_i})\right\|_{L_2(\widehat{\nu})},
\end{align*}
and thus $\left\|T\right\|\leq 1$. Clearly, $f'=T\circ g'$. Hence $f\in\Gamma^{\widehat{\B}}_2(\D,X)$ with $\gamma^{\B}_2(f)\leq \left\|T\right\|\rho_\B(g)\leq c$.
\end{proof}

Theorem \ref{theo-8} admits the following remarkable reformulation.

\begin{corollary}\label{cor-8}
Let $X$ be a complex Banach space and $f\in\H(\D,X)$. The following assertions are equivalent:
\begin{enumerate}
\item $f$ factors through a Hilbert space.
\item There exists a constant $c\geq 0$ such that for every $n\in\N$, every $n\times n$ unitary complex matrix $(a_{ij})$ and every $(\mu_j,w_j)_{j=1}^n\in(\C\times\D)^n$, we have 
$$
\sum_{i=1}^n\left\|\sum_{j=1}^n\mu_j a_{ij}f'(w_j)\right\|^2\leq c^2\sum_{j=1}^n\frac{\left|\mu_j\right|^2}{(1-|w_j|^2)^2}.
$$ 
\end{enumerate}
In this case, $\gamma^{\B}_2(f)$ is the minimum of all constants $c$ satisfying the preceding inequality. 
\end{corollary}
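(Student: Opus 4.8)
The plan is to deduce Corollary \ref{cor-8} from Theorem \ref{theo-8} by recognizing condition (ii) here as a unitary repackaging of the Bloch-domination condition (ii) there, with matching optimal constants; I would treat the two implications separately.

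For $(i)\Rightarrow(ii)$ I would argue directly from a factorization $f=T\circ g$ with $g\in\widehat{\B}(\D,H)$ and $T\in\L(H,X)$, mimicking the computation in the proof of Theorem \ref{theo-8}. Given an $n\times n$ unitary $(a_{ij})$ and data $(\mu_j,w_j)_{j=1}^n$, I would write $\sum_j\mu_j a_{ij}f'(w_j)=T\bigl(\sum_j\mu_j a_{ij}g'(w_j)\bigr)$ and expand $\|\cdot\|_H^2$ against an orthonormal basis $(v_\alpha)$ of $H$, bounding the left-hand side by $\|T\|^2\sum_\alpha\sum_i\bigl|\sum_j a_{ij}\,\mu_j\langle g'(w_j)\,|\,v_\alpha\rangle\bigr|^2$. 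For each fixed $\alpha$ the vector $(\mu_j\langle g'(w_j)\,|\,v_\alpha\rangle)_j$ is only rotated by the unitary $(a_{ij})$, so its $\ell_2^n$-norm is preserved; summing over $\alpha$ collapses the expression to $\|T\|^2\sum_j|\mu_j|^2\|g'(w_j)\|_H^2$. The Bloch estimate $(1-|w_j|^2)\|g'(w_j)\|\le\rho_\B(g)$ followed by the infimum over factorizations then yields (ii) with $c=\gamma^{\B}_2(f)$.

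For $(ii)\Rightarrow(i)$ I would recover Theorem \ref{theo-8}(ii) and then invoke that theorem. Applying (ii) with $n=1$ and $a_{11}=1$ first gives $(1-|w|^2)\|f'(w)\|\le c$, so $f\in\widehat{\B}(\D,X)$ and its linearization $S_f$ is available. Now if $(\lambda_i,z_i)_{i=1}^n\prec(\mu_j,w_j)_{j=1}^m$, then by Theorem \ref{prop-1} there is an $n\times m$ contraction $A=(a_{ij})$ with $\lambda_i\gamma_{z_i}=\sum_{j=1}^m a_{ij}\mu_j\gamma_{w_j}$, and applying $S_f$ (recall $S_f\gamma_z=f'(z)$) turns this into $\lambda_if'(z_i)=\sum_{j=1}^m a_{ij}\mu_jf'(w_j)$. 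The crucial step is to dilate $A$ to a genuine unitary, for instance through the Halmos dilation
$$
U=\begin{pmatrix} A & (I_n-AA^*)^{1/2}\\ (I_m-A^*A)^{1/2} & -A^*\end{pmatrix},
$$
an $(n+m)\times(n+m)$ unitary matrix having $A$ as its top-left block. Feeding $U$ together with the data $(\mu_j,w_j)_{j=1}^m$ padded by zeros ($\mu_j=0$ for $j>m$) into hypothesis (ii), the first $n$ rows of $U$ reproduce exactly $\lambda_if'(z_i)$, so the nonnegative left-hand side dominates $\sum_{i=1}^n|\lambda_i|^2\|f'(z_i)\|^2$ while the right-hand side equals $c^2\sum_{j=1}^m|\mu_j|^2/(1-|w_j|^2)^2$. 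This is precisely Theorem \ref{theo-8}(ii) with the same constant, and that theorem supplies the factorization with $\gamma^{\B}_2(f)\le c$.

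Putting the two implications together shows that $f$ factors through a Hilbert space exactly when (ii) holds and that the least admissible $c$ equals $\gamma^{\B}_2(f)$. The only nonroutine ingredient is the interplay between contractions and unitaries: in $(i)\Rightarrow(ii)$ I rely on a unitary preserving $\ell_2$-norms, while in $(ii)\Rightarrow(i)$ I use the Halmos dilation to enlarge the contraction furnished by Theorem \ref{prop-1} into a unitary, the zero-padding ensuring the extra rows only increase the left-hand side. I expect the main technical checkpoint to be verifying that $U$ is indeed unitary—the block identities $UU^*=U^*U=I$, which hinge on the intertwining relation $A(I_m-A^*A)^{1/2}=(I_n-AA^*)^{1/2}A$—though this is entirely standard.
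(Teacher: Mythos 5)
Your proof is correct, and in both directions it takes a genuinely different route from the paper's. For $(i)\Rightarrow(ii)$ the paper never returns to the factorization $f=T\circ g$: it starts from the singleton case of Theorem \ref{theo-8}(ii), namely $\sum_j|\mu_j|^2\|f'(w_j)\|^2\leq c^2\sum_j|\mu_j|^2/(1-|w_j|^2)^2$, and tries to absorb the unitary by the triangle inequality plus Cauchy--Schwarz, which produces the factor $\sum_{i,j}|a_{ij}|^2$; since this equals $n$ for an $n\times n$ unitary, the printed chain only yields the bound with an extra factor of $n$. Your Parseval argument---transporting the unitary into $H$, where it preserves the $\ell_2^n$-norm of $(\mu_j\langle g'(w_j)\,|\,v_\alpha\rangle)_j$ for each basis vector $v_\alpha$---retains the orthogonality that the Cauchy--Schwarz route discards and gives the clean constant $\|T\|\rho_\B(g)$; it is the more robust (indeed, the correct) way to obtain the sharp estimate. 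For $(ii)\Rightarrow(i)$ both you and the paper reduce to Theorem \ref{theo-8}(ii) via the contraction $A$ supplied by Theorem \ref{prop-1}, but you enlarge $A$ to a unitary by the Halmos dilation and feed in zero-padded data, whereas the paper keeps the size fixed and uses convexity of $B\mapsto\sum_i\|\sum_j b_{ij}\mu_jf'(w_j)\|^2$ together with the fact that $B_{\L(\ell_2^n)}$ is the convex hull of its extreme points, the unitaries. The two devices are interchangeable here; the dilation is more constructive and avoids the extreme-point/Carath\'eodory input, at the cost of verifying unitarity of $U$ (your intertwining identity $A(I_m-A^*A)^{1/2}=(I_n-AA^*)^{1/2}A$ is exactly what is needed). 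The only point worth making explicit is that applying $S_f$ to the identity $\lambda_i\gamma_{z_i}=\sum_j a_{ij}\mu_j\gamma_{w_j}$ presupposes $f\in\widehat{\B}(\D,X)$; you correctly extract the Bloch bound from the $n=1$ case of (ii), and the normalization $f(0)=0$ is implicit throughout this section of the paper.
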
 

\begin{proof}
$(i)\Rightarrow(ii)$: If $(i)$ holds, then Theorem \ref{theo-8} provides a constant $c\geq 0$ such that 
$$
\sum_{j=1}^n\left|\mu_j\right|^2\left\|f'(w_j)\right\|^2\leq c^2\sum_{j=1}^n\frac{\left|\mu_j\right|^2}{(1-|w_j|^2)^2}
$$
for all $n\in\N$ and $(\mu_j,w_j)_{j=1}^n\in(\C\times\D)^n$. Let $(a_{ij})$ be a $n\times n$ unitary complex matrix. Using the Cauchy--Schwarz Inequality, we have
\begin{align*}
\sum_{i=1}^n\left\|\sum_{j=1}^n\mu_j a_{ij}f'(w_j)\right\|^2&\leq \sum_{i=1}^n\left(\sum_{j=1}^n\left|\mu_j\right|\left|a_{ij}\right|\left\|f'(w_j)\right\|\right)^2\\
&\leq \sum_{i=1}^n\left(\sum_{j=1}^n\left|\mu_j\right|^2\left\|f'(w_j)\right\|^2\right)\left(\sum_{j=1}^n\left|a_{ij}\right|^2\right)\\
&=\left(\sum_{j=1}^n\left|\mu_j\right|^2\left\|f'(w_j)\right\|^2\right)\left(\sum_{i=1}^n\sum_{j=1}^n\left|a_{ij}\right|^2\right)\\
&\leq \sum_{j=1}^n\left|\mu_j\right|^2\left\|f'(w_j)\right\|^2\leq c^2\sum_{j=1}^n\frac{\left|\mu_j\right|^2}{(1-|w_j|^2)^2}.
\end{align*}

$(ii)\Rightarrow(i)$: Assume there is a constant $c\geq 0$ such that  
$$
\sum_{i=1}^n\left\|\sum_{j=1}^n\mu_j b_{ij}f'(w_j)\right\|^2\leq c^2\sum_{j=1}^n\frac{\left|\mu_j\right|^2}{(1-|w_j|^2)^2}
$$
for any $n\in\N$, any $n\times n$ unitary complex matrix $(b_{ij})$ and any $(\mu_j,w_j)_{j=1}^n\in(\C\times\D)^n$. It suffices if we show that the assertion $(ii)$ of Theorem \ref{theo-8} holds. For it, let $n,m\in\N$, $(\lambda_i,z_i)_{i=1}^n\in(\C\times\D)^n$ and $(\mu_j,w_j)_{j=1}^m\in(\C\times\D)^m$ such that $(\lambda_i,z_i)_{i=1}^n\prec (\mu_j,w_j)_{j=1}^m$. We can suppose $m=n$ (otherwise we add some zeros). By Theorem \ref{prop-1}, there is a $n\times n$ complex matrix $A=(a_{ij})$ in $B_{\L(\ell_2^n)}$ such that   
$$
\lambda_i\gamma_{z_i}=\sum_{j=1}^n a_{ij}\mu_j\gamma_{w_j}
$$
for every $1\leq i\leq n$. Then, if $\phi\colon B_{\L(\ell_2^n)}\to\R^+$ is the convex functional defined by 
$$
\phi(B)=\sum_{i=1}^n\left\|\sum_{j=1}^n b_{ij}\mu_jf'(w_j)\right\|^2\qquad (B=(b_{ij})\in B_{\L(\ell_2^n)}),
$$
we have 
$$
\phi(A)=\sum_{i=1}^n\left\|\sum_{j=1}^n a_{ij}\mu_j\gamma_{w_j}(f)\right\|^2=\sum_{i=1}^n\left\|\lambda_i\gamma_{z_i}(f)\right\|^2=\sum_{i=1}^n\left|\lambda_i\right|^2\left\|f'(z_i)\right\|^2.
$$
Moreover, by hypothesis, we have  
$$
\phi(B)\leq c^2\sum_{j=1}^m\frac{\left|\mu_j\right|^2}{(1-|w_j|^2)^2}
$$
for any $n\times n$ unitary complex matrix $B=(b_{ij})$. Since the extreme points of $B_{\L(\ell_2^n)}$ are precisely the unitary complex matrices, then $B_{\L(\ell_2^n)}$ coincides with the convex hull of such matrices by the Minkowski--Carath\'eodory Theorem. Therefore, we deduce that  
$$
\phi(A)\leq c^2\sum_{j=1}^m\frac{\left|\mu_j\right|^2}{(1-|w_j|^2)^2},
$$
and this completes the proof.
\end{proof}


\section{Duality}\label{5}

Our aim in this last section is to show that the space $(\Gamma^{\widehat{\B}}_2(\D,X^*),\gamma^{\B}_2)$ can be canonically identified with the dual of the completion of the space $\lin(\Gamma(\D))\otimes X$ with respect to a convenient norm.

We first recall some concepts introduced in \cite{CabJimRui-23}. In what follows, given a complex Banach space $X$, we will sometimes write $\langle x^*,x\rangle=x^*(x)$ when $x\in X$ and $x^*\in X^*$. Given $z\in\D$ and $x\in X$, the functional $\gamma_z\otimes x\colon\widehat{\B}(\D,X^*)\to\mathbb{C}$ defined by  
$$
\left(\gamma_z\otimes x\right)(f)=\left\langle f'(z),x\right\rangle\qquad \left(f\in\widehat{\B}(\D,X^*)\right), 
$$
is linear and continuous with $\left\|\gamma_z\otimes x\right\|=\left\|x\right\|/(1-|z|^2)$. We set the linear space 
$$
\lin(\Gamma(\D))\otimes X=\lin\left\{\gamma_z\otimes x\colon z\in\D,\, x\in X\right\}\subseteq\widehat{\B}(\D,X^*)^*,
$$
whose elements are called $X$-valued Bloch molecules on $\D$.

Our study of the duality of $(\Gamma^{\widehat{\B}}_2(\D,X^*),\gamma^{\B}_2)$ requires to consider the following norm on these molecules. 

\begin{definition}
Let $X$ be a complex Banach space. Given $\gamma\in\lin(\Gamma(\D))\otimes X$, define 
$$
w_2^{\B}(\gamma)=\inf\left\{\left(\sum_{i=1}^n\left\|x_i\right\|^2\right)^{\frac{1}{2}}\left(\sum_{j=1}^m\frac{\left|\mu_j\right|^2}{(1-|w_j|^2)^2}\right)^{\frac{1}{2}}\right\},
$$
where the infimum is taken over all such representations of $\gamma$ in the form $\sum_{i=1}^n\lambda_i\gamma_{z_i}\otimes x_i$ such that $(\lambda_i,z_i)_{i=1}^n\prec (\mu_j,w_j)_{j=1}^m$ with  
$n,m\in\N$, $(\lambda_i,z_i)_{i=1}^n\in(\C\times\D)^n$, $(\mu_j,w_j)_{j=1}^m\in(\C\times\D)^m$ and $(x_i)_{i=1}^n\in X^n$.
\end{definition}

The concept of Bloch reasonable cross-norm was introduced in \cite{CabJimRui-23}, inspired by the analogue in the theory of tensor product (see, for example, \cite{Rya-02}).


\begin{theorem}\label{teo-che-norms}
$w_2^\B$ is a Bloch reasonable cross-norm on $\lin(\Gamma(\D))\otimes X$ for any complex Banach space $X$.
\end{theorem}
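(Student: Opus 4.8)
The plan is to verify directly the two defining conditions of a Bloch reasonable cross-norm (modelled, as in \cite{Rya-02}, on Ryan's reasonable cross-norms): namely, that $w_2^{\B}$ is a norm on $\lin(\Gamma(\D))\otimes X$ satisfying the cross-norm bound
$$
w_2^{\B}(\gamma_z\otimes x)\le\frac{\left\|x\right\|}{1-|z|^2}\qquad (z\in\D,\ x\in X),
$$
together with the dual estimate that, for every $g\in\widehat{\B}(\D)$ and $x^*\in X^*$, the functional $\gamma\mapsto\gamma(g\cdot x^*)$ on $\lin(\Gamma(\D))\otimes X$ is bounded by $\rho_\B(g)\left\|x^*\right\|w_2^{\B}(\gamma)$, where $g\cdot x^*\in\widehat{\B}(\D,X^*)$ denotes the map $z\mapsto g(z)x^*$. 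I first record that $w_2^{\B}$ is finite and well-defined: each $\gamma=\sum_{i=1}^n\lambda_i\gamma_{z_i}\otimes x_i$ is subordinated to itself, $(\lambda_i,z_i)_{i=1}^n\prec(\lambda_i,z_i)_{i=1}^n$, so the infimum runs over a nonempty set. Homogeneity is immediate from rewriting $\beta\gamma=\sum_i\lambda_i\gamma_{z_i}\otimes(\beta x_i)$ with the same dominating sequence.

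The \emph{main obstacle} is the triangle inequality, which I would settle by the standard normalization trick for $\gamma_2$-type norms. Fix $\varepsilon>0$ and representations of $\gamma,\gamma'$ (assumed nonzero, the zero case being trivial) nearly attaining $w_2^{\B}(\gamma),w_2^{\B}(\gamma')$, with associated quantities $A=(\sum_i\left\|x_i\right\|^2)^{1/2}$, $B=(\sum_j|\mu_j|^2/(1-|w_j|^2)^2)^{1/2}$ (and $A',B'$ for $\gamma'$), all positive. The substitution $x_i\mapsto tx_i$, $\lambda_i\mapsto t^{-1}\lambda_i$, $\mu_j\mapsto t^{-1}\mu_j$ leaves both $\gamma$ and the subordination $\prec$ unchanged while sending $(A,B)\mapsto(tA,t^{-1}B)$; choosing $t=\sqrt{B/A}$ lets me assume $A=B$ and $A'=B'$, so that $A^2\le w_2^{\B}(\gamma)+\varepsilon$ and $A'^2\le w_2^{\B}(\gamma')+\varepsilon$. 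Concatenating the two subordinations gives a valid subordination of the concatenated atoms (the defining inequalities simply add), hence a representation of $\gamma+\gamma'$ whose quantities satisfy $\widetilde A=\widetilde B=(A^2+A'^2)^{1/2}$, so $\widetilde A\,\widetilde B=A^2+A'^2\le w_2^{\B}(\gamma)+w_2^{\B}(\gamma')+2\varepsilon$. Letting $\varepsilon\to0$ yields subadditivity.

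For the dual estimate, fix a representation of $\gamma$ subordinated to $(\mu_j,w_j)_{j=1}^m$. Since $\gamma(g\cdot x^*)=\sum_i\lambda_i g'(z_i)\langle x^*,x_i\rangle$, Cauchy--Schwarz gives
$$
\left|\gamma(g\cdot x^*)\right|\le\Big(\sum_i|\lambda_i|^2|g'(z_i)|^2\Big)^{1/2}\Big(\sum_i|\langle x^*,x_i\rangle|^2\Big)^{1/2}.
$$
Here the crucial point is that $g$ is \emph{scalar-valued}, so the subordination applies verbatim: $\sum_i|\lambda_i|^2|g'(z_i)|^2\le\sum_j|\mu_j|^2|g'(w_j)|^2\le\rho_\B(g)^2\sum_j|\mu_j|^2/(1-|w_j|^2)^2$, while $\sum_i|\langle x^*,x_i\rangle|^2\le\left\|x^*\right\|^2\sum_i\left\|x_i\right\|^2$. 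Multiplying and passing to the infimum over representations gives $\left|\gamma(g\cdot x^*)\right|\le\rho_\B(g)\left\|x^*\right\|w_2^{\B}(\gamma)$. (Note that the vector-valued transfer of domination used in the proof of Theorem \ref{theo-8} is never invoked; it genuinely fails for a general Banach space, which is why the scalar pairing $g\cdot x^*$ is the correct test object.) Positive-definiteness now follows: if $w_2^{\B}(\gamma)=0$, then $\sum_i\lambda_i g'(z_i)\langle x^*,x_i\rangle=0$ for all $g\in\widehat{\B}(\D)$ and $x^*\in X^*$; grouping equal-$z_i$ terms and using that $\{\gamma_z\colon z\in\D\}$ is linearly independent in $\G(\D)$ (point-derivative evaluations at distinct points are independent) forces $\langle x^*,\lambda_ix_i\rangle=0$ for all $i$ and all $x^*$, whence $\gamma=0$. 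Thus $w_2^{\B}$ is a norm.

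It remains to check the cross-norm condition. The bound $w_2^{\B}(\gamma_z\otimes x)\le\left\|x\right\|/(1-|z|^2)$ is obtained from the representation of $\gamma_z\otimes x$ by itself with dominating sequence $(\mu_1,w_1)=(1,z)$, which gives $A=\left\|x\right\|$ and $B=1/(1-|z|^2)$. For the reverse inequality I would apply the dual estimate just proved with $g=f_z$ (so $\rho_\B(f_z)=1$ and $f_z'(z)=1/(1-|z|^2)$ by Theorem \ref{main-theo}) and with $x^*\in B_{X^*}$ chosen by Hahn--Banach so that $\langle x^*,x\rangle=\left\|x\right\|$; then
$$
\frac{\left\|x\right\|}{1-|z|^2}=\left|(\gamma_z\otimes x)(f_z\cdot x^*)\right|\le\rho_\B(f_z)\left\|x^*\right\|w_2^{\B}(\gamma_z\otimes x)\le w_2^{\B}(\gamma_z\otimes x).
$$
Hence $w_2^{\B}(\gamma_z\otimes x)=\left\|x\right\|/(1-|z|^2)$, and together with the two displayed conditions this shows $w_2^{\B}$ is a Bloch reasonable cross-norm.
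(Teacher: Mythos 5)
Your proof is correct and follows essentially the same route as the paper: homogeneity by rescaling, subadditivity by balancing the two factors of a near-optimal representation and concatenating, positive definiteness via the pairing $\gamma\mapsto\gamma(g\cdot x^*)$ together with the linear independence of the Bloch atoms, and the two cross-norm estimates. Your explicit justification of the balancing substitution $x_i\mapsto tx_i$, $\lambda_i\mapsto t^{-1}\lambda_i$, $\mu_j\mapsto t^{-1}\mu_j$ is a welcome detail the paper leaves implicit, and the exact equality $w_2^{\B}(\gamma_z\otimes x)=\left\|x\right\|/(1-|z|^2)$ you derive via $f_z$ and a norming functional is more than the definition requires but is harmless.
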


\begin{proof}
Let $\gamma\in\lin(\Gamma(\D))\otimes X$ and let $\sum_{i=1}^n\lambda_i\gamma_{z_i}\otimes x_i$ be a representation of $\gamma$ with $n\in\N$, $(\lambda_i)_{i=1}^n\in\C^n$, $(z_i)_{i=1}^n\in\D^n$ and $(x_i)_{i=1}^n\in X^n$. Let $(\mu_j,w_j)_{j=1}^m\succ (\lambda_i,z_i)_{i=1}^n$ with $m\in\N$ and $(\mu_j,w_j)_{j=1}^m\in(\C\times\D)^m$. 

Clearly, $w_2^{\B}(\gamma)\geq 0$. Given $\lambda\in\C$, since $\lambda\gamma=\sum_{i=1}^n (\lambda\lambda_i)\gamma_{z}\otimes x_i$ and $(\lambda\lambda_i,z_i)_{i=1}^n\prec (\lambda\mu_j,w_j)_{j=1}^m$, we have 
$$
w_2^\B(\lambda\gamma)\leq\left(\sum_{i=1}^n\left\|x_i\right\|^2\right)^{\frac{1}{2}}\left(\sum_{j=1}^m\frac{\left|\lambda\mu_j\right|^2}{(1-|w_j|^2)^2}\right)^{\frac{1}{2}}
=\left|\lambda\right|\left(\sum_{i=1}^n\left\|x_i\right\|^2\right)^{\frac{1}{2}}\left(\sum_{j=1}^m\frac{\left|\mu_j\right|^2}{(1-|w_j|^2)^2}\right)^{\frac{1}{2}}.
$$
If $\lambda=0$, we obtain $w_2^\B(\lambda\gamma)=0=\left|\lambda\right|w_2^{\B}(\gamma)$. For $\lambda\neq 0$, since the preceding inequality holds for every representation of $\gamma$, we deduce that $w_2^\B(\lambda\gamma)\leq\left|\lambda\right|w_2^{\B}(\gamma)$. For the converse inequality, note that $w_2^{\B}(\gamma)=w_2^\B(\lambda^{-1}(\lambda\gamma))\leq |\lambda^{-1}|w_2^\B(\lambda\gamma)$ by the proved inequality, hence $\left|\lambda\right|w_2^{\B}(\gamma)\leq w_2^\B(\lambda\gamma)$ and thus $w_2^\B(\lambda\gamma)=\left|\lambda\right|w_2^{\B}(\gamma)$.

We now prove the triangular inequality of $w_2^\B$. Let $\gamma_1,\gamma_2\in\lin(\Gamma(\D))\otimes X$ and let $\varepsilon>0$. 
We can choose a representation $\gamma_1=\sum_{i=1}^n\lambda_{i}\gamma_{z_{i}}\otimes x_{i}$ and $(\mu_j,w_j)_{j=1}^m\succ(\lambda_i,z_i)_{i=1}^n$ such that
$$
\left(\sum_{i=1}^n\left\|x_{i}\right\|^2\right)^{\frac{1}{2}}\leq \left(w_2^\B(\gamma_1)+\varepsilon\right)^{\frac{1}{2}}
\qquad \text{and}\qquad 
\left(\sum_{j=1}^m\frac{\left|\mu_j\right|^2}{(1-|w_j|^2)^2}\right)^{\frac{1}{2}}\leq \left(w_2^\B(\gamma_1)+\varepsilon\right)^{\frac{1}{2}}.
$$
Similarly, take a representation $\gamma_2=\sum_{i=n+1}^{n+k}\lambda_{i}\gamma_{z_{i}}\otimes x_{i}$ and $(\mu_j,w_j)_{j=m+1}^{m+l}\succ(\lambda_i,z_i)_{i=n+1}^{n+k}$ such that 
$$
\left(\sum_{i=n+1}^{n+k}\left\|x_{i}\right\|^2\right)^{\frac{1}{2}}\leq \left(w_2^\B(\gamma_2)+\varepsilon\right)^{\frac{1}{2}}
\qquad \text{and}\qquad 
\left(\sum_{j=m+1}^{m+l}\frac{\left|\mu_j\right|^2}{(1-|w_j|^2)^2}\right)^{\frac{1}{2}}\leq \left(w_2^\B(\gamma_2)+\varepsilon\right)^{\frac{1}{2}}.
$$
Then $\gamma_1+\gamma_2=\sum_{i=1}^{n+k}\lambda_{i}\gamma_{z_{i}}\otimes x_{i}$, $(\mu_j,w_j)_{j=1}^{m+l}\succ(\lambda_i,z_i)_{i=1}^{n+k}$ and  
$$
\left(\sum_{i=n+1}^{n+k}\left\|x_{i}\right\|^2\right)^{\frac{1}{2}}\left(\sum_{j=1}^{m+l}\frac{\left|\mu_j\right|^2}{(1-|w_j|^2)^2}\right)^{\frac{1}{2}}
\leq w_2^\B(\gamma_1)+w_2^\B(\gamma_2)+2\varepsilon.
$$
hence $w_2^\B(\gamma_1+\gamma_2)\leq w_2^\B(\gamma_1)+w_2^\B(\gamma_2)+2\varepsilon$, and thus $w_2^\B(\gamma_1+\gamma_2)\leq w_2^\B(\gamma_1)+w_2^\B(\gamma_2)$ by the arbitrariness of $\varepsilon$. 

Hence $w_2^\B$ is a seminorm. To prove that it is a norm, note first that 
\begin{align*}
\left|\sum_{i=1}^n \lambda_i g'(z_i)x^*(x_i)\right|
&\leq\sum_{i=1}^n\left|\lambda_i\right|\left|g'(z_i)\right|\left\|x_i\right\|\\
&\leq\left(\sum_{i=1}^{n}\left\|x_i\right\|^2\right)^{\frac{1}{2}}\left(\sum_{i=1}^{n}\left|\lambda_i\right|^{2}\left|g'(z_i)\right|^{2}\right)^{\frac{1}{2}}\\
&\leq\left(\sum_{i=1}^{n}\left\|x_i\right\|^2\right)^{\frac{1}{2}}\left(\sum_{j=1}^{m}\left|\mu_j\right|^{2}\left|g'(w_j)\right|^{2}\right)^{\frac{1}{2}}\\
&\leq\left(\sum_{i=1}^n\left\|x_i\right\|^2\right)^{\frac{1}{2}}\left(\sum_{j=1}^m\frac{\left|\mu_j\right|^2}{(1-|w_j|^2)^2}\right)^{\frac{1}{2}}
\end{align*}
for any $g\in B_{\widehat{\B}(\D)}$ and $x^*\in B_{X^*}$. 
Since the first quantity $\left|\sum_{i=1}^n \lambda_i g'(z_i)x^*(x_i)\right|$ does not depend on the representation of $\gamma$ because 
$$
\sum_{i=1}^n \lambda_i g'(z_i)x^*(x_i)=\left(\sum_{i=1}^n\lambda_i\gamma_{z_i}\otimes x_i\right)(g\cdot x^*)=\gamma(g\cdot x^*),
$$
taking the infimum over all representations of $\gamma$ we deduce that  
$$
\left|\sum_{i=1}^n \lambda_i g'(z_i)x^*(x_i)\right|\leq w_2^{\B}(\gamma)
$$
for any $g\in B_{\widehat{\B}(\D)}$ and $x^*\in B_{X^*}$. Now, if $w_2^{\B}(\gamma)=0$, the preceding inequality yields 
$$
\left(\sum_{i=1}^n \lambda_i x^*(x_i)\gamma_{z_i}\right)(g)=\sum_{i=1}^n \lambda_i x^*(x_i) g'(z_i)=0
$$
for all $g\in B_{\widehat{\B}(\D)}$ and $x^*\in B_{X^*}$. For each $x^*\in B_{X^*}$, this implies that $\sum_{i=1}^n\lambda_i x^*(x_i)\gamma_{z_i}=0$, and since $\Gamma(\D)$ is a linearly independent subset of $\G(\D)$ by \cite[Remark 2.8]{JimRui-22}, it follows that $x^*(x_i)\lambda_i =0$ for all $i\in\{1,\ldots,n\}$, hence $\lambda_i=0$ for all $i\in\{1,\ldots,n\}$ since $B_{X^*}$ separates the points of $X$, and thus $\gamma=0$. 

Finally, the norm $w_2^\B$ is a Bloch reasonable cross-norm since it holds the two conditions of \cite[Definition 2.5]{CabJimRui-23}:
\begin{enumerate}
	\item Given $z\in\D$ and $x\in X$, we have
$$
w_2^\B(\gamma_z\otimes x)\leq\frac{\left\|x\right\|}{1-|z|^2}=\left\|\gamma_z\right\|\left\|x\right\|.
$$
\item For $g\in\widehat{\B}(\D)$ and $x^*\in X^*$, we have 
\begin{align*}
\left|(g\otimes x^*)(\gamma)\right|&=\left|\sum_{i=1}^n\lambda_i(g\otimes x^*)(\gamma_{z_i}\otimes x_i)\right|=\left|\sum_{i=1}^n\lambda_i g'(z_i)x^*(x_i)\right|\\
&
\leq \left\|x^*\right\|\sum_{i=1}^n\left|\lambda_i\right|\left|g'(z_i)\right|\left\|x_i\right\|\\
&\leq\left\|x^*\right\|\left(\sum_{i=1}^{n}\left\|x_i\right\|^2\right)^{\frac{1}{2}}\left(\sum_{i=1}^{n}\left|\lambda_i\right|^{2}\left|g'(z_i)\right|^{2}\right)^{\frac{1}{2}}\\
&\leq\left\|x^*\right\|\left(\sum_{i=1}^{n}\left\|x_i\right\|^2\right)^{\frac{1}{2}}\left(\sum_{j=1}^{m}\left|\mu_j\right|^{2}\left|g'(w_j)\right|^{2}\right)^{\frac{1}{2}}\\
&\leq\left\|x^*\right\|\rho_\B(g)\left(\sum_{i=1}^n\left\|x_i\right\|^2\right)^{\frac{1}{2}}\left(\sum_{j=1}^m\frac{\left|\mu_j\right|^2}{(1-|w_j|^2)^2}\right)^{\frac{1}{2}},
\end{align*}
and taking the infimum over all representations of $\gamma$, we obtain that 
$$
\left|(g\otimes x^*)(\gamma)\right|\leq \rho_\B(g)\left\|x^*\right\|w_2^\B(\gamma).
$$
Hence $g\otimes x^*\in (\lin(\Gamma(\D))\otimes_{w_2^\B} X)^*$ with $\left\|g\otimes x^*\right\|\leq \rho_\B(g)\left\|x^*\right\|$.
\end{enumerate}
\end{proof}

We are now ready to state the announced result.

\begin{theorem} 
Let $X$ be a complex Banach space. Then $(\Gamma^{\widehat{\B}}_2(\D,X^*),\gamma^{\B}_2)$ is isometrically isomorphic to $(\lin(\Gamma(\D))\widehat{\otimes}_{w_2^\B} X)^*$, via the linear mapping $\Lambda\colon\Gamma^{\widehat{\B}}_2(\D,X^*)\to(\lin(\Gamma(\D))\widehat{\otimes}_{w_2^\B} X)^*$ defined by 
$$
\Lambda(f)(\lambda\gamma_z\otimes x)=\lambda\left\langle f'(z),x\right\rangle 
$$
for $f\in\Gamma^{\widehat{\B}}_2(\D,X^*)$, $\lambda\in\C$, $z\in\D$ and $x\in X$. Moreover, its inverse satisfies that  
$$
\left\langle(\Lambda^{-1}(\varphi))'(z),x\right\rangle=\varphi(\gamma_z\otimes x) 
$$
for $\varphi\in(\lin(\Gamma(\D))\widehat{\otimes}_{w_2^\B} X)^*$, $z\in\D$ and $x\in X$.

Moreover, on the unit ball of $\Gamma^{\widehat{\B}}_2(\D,X^*)$ the weak* topology coincides with the topology of pointwise $\sigma(X^*,X)$-convergence.
\end{theorem}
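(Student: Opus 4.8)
The plan is to show that $\Lambda$ is a well-defined linear isometry of $\Gamma^{\widehat{\B}}_2(\D,X^*)$ onto the full dual, and then to settle the topological assertion by a separate compactness argument.

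First I would verify that $\Lambda(f)$ is well-defined and bounded with $\|\Lambda(f)\|\le\gamma^{\B}_2(f)$. Note that $\Lambda(f)(\gamma)=\sum_{i=1}^n\lambda_i\langle f'(z_i),x_i\rangle$ is independent of the representation of $\gamma$, since it equals the action of the molecule $\gamma$ on $f\in\widehat{\B}(\D,X^*)$. Given a representation $\gamma=\sum_{i=1}^n\lambda_i\gamma_{z_i}\otimes x_i$ with $(\lambda_i,z_i)_{i=1}^n\prec(\mu_j,w_j)_{j=1}^m$, Cauchy--Schwarz gives
$$
\left|\Lambda(f)(\gamma)\right|\leq\left(\sum_{i=1}^n\|x_i\|^2\right)^{\frac12}\left(\sum_{i=1}^n|\lambda_i|^2\|f'(z_i)\|^2\right)^{\frac12},
$$
and the second factor is controlled by Theorem \ref{theo-8}(ii), which yields $\sum_i|\lambda_i|^2\|f'(z_i)\|^2\le\gamma^{\B}_2(f)^2\sum_j|\mu_j|^2/(1-|w_j|^2)^2$. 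Taking the infimum over all representations produces $|\Lambda(f)(\gamma)|\le\gamma^{\B}_2(f)\,w_2^\B(\gamma)$, so $\Lambda(f)$ extends to the completion with $\|\Lambda(f)\|\le\gamma^{\B}_2(f)$. Injectivity is immediate: $\Lambda(f)=0$ forces $\langle f'(z),x\rangle=0$ for all $z,x$, hence $f'\equiv 0$ and $f=0$ since $f(0)=0$.

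Next I would prove surjectivity together with the reverse estimate. Given $\varphi\in(\lin(\Gamma(\D))\widehat{\otimes}_{w_2^\B}X)^*$, for each $z$ the map $x\mapsto\varphi(\gamma_z\otimes x)$ is linear and bounded because $w_2^\B(\gamma_z\otimes x)\le\|x\|/(1-|z|^2)$; it thus defines $F(z)\in X^*$ with $\|F(z)\|\le\|\varphi\|/(1-|z|^2)$. Since $z\mapsto\gamma_z$ is holomorphic into $\G(\D)$ and $X$ norms $X^*$, the map $F$ is holomorphic from $\D$ into $X^*$, and \cite[Lemma 2.9]{JimRui-22} provides $f\in\H(\D,X^*)$ with $f(0)=0$ and $f'=F$; the bound on $\|F(z)\|$ places $f$ in $\widehat{\B}(\D,X^*)$. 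The decisive point is to verify Theorem \ref{theo-8}(ii) with constant $c=\|\varphi\|$. For $(\lambda_i,z_i)_{i=1}^n\prec(\mu_j,w_j)_{j=1}^m$, write $d_i=\|f'(z_i)\|$, pick $u_i\in B_X$ with $\langle f'(z_i),u_i\rangle$ real and arbitrarily close to $d_i$, and test $\varphi$ against the molecule $\gamma=\sum_i\lambda_i\gamma_{z_i}\otimes x_i$ with $x_i=d_i\,\overline{\lambda_i}\,u_i$. Then $\|x_i\|\le d_i|\lambda_i|$, the coefficients $(\lambda_i,z_i)$ remain subordinated to $(\mu_j,w_j)$, so
$$
w_2^\B(\gamma)\leq\left(\sum_i|\lambda_i|^2 d_i^2\right)^{\frac12}\left(\sum_j\frac{|\mu_j|^2}{(1-|w_j|^2)^2}\right)^{\frac12},
$$
whereas $\varphi(\gamma)=\sum_i|\lambda_i|^2 d_i\langle f'(z_i),u_i\rangle$ is nonnegative and arbitrarily close to $\sum_i|\lambda_i|^2 d_i^2$. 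Dividing $|\varphi(\gamma)|\le\|\varphi\|\,w_2^\B(\gamma)$ by $(\sum_i|\lambda_i|^2 d_i^2)^{1/2}$ and letting the approximation tighten gives exactly Theorem \ref{theo-8}(ii), whence $f\in\Gamma^{\widehat{\B}}_2(\D,X^*)$ with $\gamma^{\B}_2(f)\le\|\varphi\|$. As $\Lambda(f)$ and $\varphi$ agree on every $\gamma_z\otimes x$, they agree on the dense subspace $\lin(\Gamma(\D))\otimes X$ and hence coincide; combining $\|\varphi\|=\|\Lambda(f)\|\le\gamma^{\B}_2(f)\le\|\varphi\|$ delivers the isometry and the stated formula for $\Lambda^{-1}$.

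For the topological statement I would argue by compactness. On the unit ball $B$ of $\Gamma^{\widehat{\B}}_2(\D,X^*)$ the weak* topology is compact by Banach--Alaoglu, while the topology of pointwise $\sigma(X^*,X)$-convergence, being the restriction of the product topology on $\prod_{z\in\D}(X^*,\sigma(X^*,X))$, is Hausdorff. Since a continuous bijection from a compact space onto a Hausdorff space is a homeomorphism, it suffices to check that the identity from $(B,w^*)$ to $B$ with the pointwise topology is continuous. By uniform boundedness on $B$ and density of $\lin(\Gamma(\D))\otimes X$, weak* convergence reduces to convergence against the generators $\gamma_z\otimes x$, so a net $f_\alpha\to f$ weak* in $B$ means precisely $\langle f_\alpha'(z),x\rangle\to\langle f'(z),x\rangle$ for all $z,x$. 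Writing $\langle f_\alpha(z),x\rangle=\int_0^1\langle f_\alpha'(tz),x\rangle\,z\,dt$ and dominating the integrand uniformly in $\alpha$ by the Bloch bound $\|f_\alpha'(tz)\|\le 1/(1-|z|^2)$ (via Lemma \ref{lem-1}), dominated convergence yields $\langle f_\alpha(z),x\rangle\to\langle f(z),x\rangle$, which is pointwise $\sigma(X^*,X)$-convergence. The main obstacle I anticipate is the surjectivity step: one must choose a molecule that simultaneously recovers $\sum_i|\lambda_i|^2\|f'(z_i)\|^2$ under $\varphi$ and keeps $w_2^\B(\gamma)$ controlled by the same quantity, and it is the weighting $x_i=d_i\,\overline{\lambda_i}\,u_i$ that makes the constant sharp; the holomorphy of $F$ and the passage to an antiderivative are comparatively routine given \cite[Lemma 2.9]{JimRui-22}.
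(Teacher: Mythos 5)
Your proposal is correct and follows essentially the same route as the paper: the same Cauchy--Schwarz plus Theorem \ref{theo-8} estimate for $\|\Lambda(f)\|\leq\gamma^{\B}_2(f)$, the same construction of $f_\varphi$ from $\varphi$ via an antiderivative of $z\mapsto\varphi(\gamma_z\otimes\cdot)$, the same test-molecule argument for $\gamma^{\B}_2(f_\varphi)\leq\|\varphi\|$ (your weights $x_i=d_i\overline{\lambda_i}u_i$ are just the explicit extremizer of the auxiliary functional $T$ on $\C^n$ that the paper introduces), and the same compact-to-Hausdorff argument for the topological claim. No gaps beyond the same points the paper itself defers to cited results.
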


\begin{proof}
Let $f\in\Gamma^{\widehat{\B}}_2(\D,X^*)$ and let $\Lambda_0(f)\colon\lin(\Gamma(\D))\otimes X\to\mathbb{C}$ be the linear functional given by 
$$
\Lambda_0(f)(\gamma)=\sum_{i=1}^n\lambda_i\left\langle f'(z_i),x_i\right\rangle 
$$
for $\gamma=\sum_{i=1}^n\lambda_i\gamma_{z_i}\otimes x_i\in\lin(\Gamma(\D))\otimes X$ with $n\in\N$, $(\lambda_i,z_i)_{i=1}^n\in(\C\times\D)^n$ and $(x_i)_{i=1}^n\in X^n$. Let $(\mu_j,w_j)_{j=1}^m\succ (\lambda_i,z_i)_{i=1}^n$ with $m\in\N$ and $(\mu_j,w_j)_{j=1}^m\in(\C\times\D)^m$. Note that $\Lambda_0(f)\in(\lin(\Gamma(\D))\otimes_{w_2^\B} X)^*$ with 
$\left\|\Lambda_0(f)\right\|\leq \gamma^{\B}_2(f)$. Indeed, by using Theorem \ref{theo-8}, we have   
\begin{align*}
\left|\Lambda_0(f)(\gamma)\right|&=\left|\sum_{i=1}^n\lambda_i\left\langle f'(z_i), x_i\right\rangle\right|\leq\sum_{i=1}^n\left|\lambda_i\right|\left\|f'(z_i)\right\|\left\|x_i\right\|\\
&\leq \left(\sum_{i=1}^n\left\|x_i\right\|^{2}\right)^{\frac{1}{2}}\left(\sum_{i=1}^n\left|\lambda_i\right|^2\left\|f'(z_i)\right\|^2\right)^{\frac{1}{2}}\\
&\leq \gamma^{\B}_2(f)\left(\sum_{i=1}^n\left\|x_i\right\|^{2}\right)^{\frac{1}{2}}\left(\sum_{j=1}^m\frac{\left|\mu_j\right|^2}{(1-|w_j|^2)^2}\right)^{\frac{1}{2}}
\end{align*}
and taking the infimum over all the representations of $\gamma$, we deduce that $\left|\Lambda_0(f)(\gamma)\right|\leq \gamma^{\B}_2(f)w_2^\B(\gamma)$, and thus $\left\|\Lambda_0(f)\right\|\leq\gamma^{\B}_2(f)$.


Let $\lin(\Gamma(\D))\widehat{\otimes}_{w^\B_2} X$ be the completion of the space $\lin(\Gamma(\D))\otimes X$ with the norm $w^\B_2$. Hence there is a unique continuous mapping $\Lambda(f)$ from $\lin(\Gamma(\D))\widehat{\otimes}_{w^\B_2} X$ into $\mathbb{C}$ that extends $\Lambda_0(f)$. Further, $\Lambda(f)$ is linear and $\left\|\Lambda(f)\right\|=\left\|\Lambda_0(f)\right\|$.

Let $\Lambda\colon\Gamma^{\widehat{\B}}_2(\D,X^*)\to(\lin(\Gamma(\D))\widehat{\otimes}_{w^\B_2} X)^*$ be the map so defined. It is immediate that the mapping $\Lambda_0\colon\Gamma^{\widehat{\B}}_2(\D,X^*)\to(\lin(\Gamma(\D))\otimes_{w^\B_2} X)^*$ is linear. If $f\in\Gamma^{\widehat{\B}}_2(\D,X^*)$ and $\Lambda_0(f)=0$, then $\left\langle f'(z),x\right\rangle=\Lambda_0(f)(\gamma_z\otimes x)=0$ for all $z\in\D$ and $x\in X$, hence $f'(z)=0$ for all $z\in\D$ and therefore $f=0$. This proves that $\Lambda_0$ is injective. It follows easily that $\Lambda$ is also linear and injective.

To prove that $\Lambda$ is a surjective isometry, let $\varphi\in(\lin(\Gamma(\D))\widehat{\otimes}_{w^\B_2} X)^*$ and define $F_\varphi\colon\D\to X^*$ by 
$$
\left\langle F_\varphi(z),x\right\rangle=\varphi(\gamma_z\otimes x)\qquad\left(z\in \D,\; x\in X\right). 
$$
An argument similar to that in the proof of Proposition 2.4 in \cite{CabJimRui-23} assures that 
there exists a mapping $f_\varphi\in\widehat{\B}(\D,X^*)$ with $\rho_{\B}(f_\varphi)\leq\left\|\varphi\right\|$ such that $f_\varphi'=F_\varphi$.

We now see that $f_\varphi\in\Gamma^{\widehat{\B}}_2(\D,X^*)$. Let $n,m\in\N$, $(\lambda_i,z_i)_{i=1}^n\in(\C\times\D)^n$ and $(\mu_j,w_j)_{j=1}^m\in(\C\times\D)^m$ such that $(\lambda_i,z_i)_{i=1}^n\prec (\mu_j,w_j)_{j=1}^m$. Let $(x_i)_{i=1}^n\in X^n$ and $\varepsilon>0$. For each $i\in\{1,\ldots,n\}$, there exists $x_i\in X$ with $\left\|x_i\right\|\leq 1+\varepsilon$ such that $\left\langle f_\varphi'(z_i),x_i\right\rangle=\left\|f_\varphi'(z_i)\right\|$. Clearly, the function $T\colon\C^n\to\C$ given by 
$$
T(t_1,\ldots,t_n)=\sum_{i=1}^n t_i \lambda_i\left\|f_\varphi'(z_i)\right\|,\quad\forall (t_1,\ldots,t_n)\in\C^n,
$$
is linear and continuous on $(\C^n,||\cdot||_2)$ with $\left\|T\right\|=\left(\sum_{i=1}^n\left|\lambda_i\right|^2\left\|f'_\varphi(z_i)\right\|^{2}\right)^{\frac{1}{2}}$. For all $(t_1,\ldots,t_n)\in\C^n$ with $||(t_1,\ldots,t_n)||_2\leq 1$, we have
\begin{align*}
\left|T(t_1,\ldots,t_n)\right|&=\left|\varphi\left(\sum_{i=1}^n t_i\lambda_i\gamma_{z_i}\otimes x_i\right)\right|
\leq \left\|\varphi\right\| w^\B_2\left(\sum_{i=1}^n\lambda_i\gamma_{z_i}\otimes t_ix_i\right)\\
&\leq\left\|\varphi\right\|\left(\sum_{i=1}^n\left\|t_ix_i\right\|^2\right)^{\frac{1}{2}}\left(\sum_{j=1}^m\frac{\left|\mu_j\right|^2}{(1-|w_j|^2)^2}\right)^{\frac{1}{2}}\\
&\leq(1+\varepsilon)\left\|\varphi\right\|\left(\sum_{j=1}^m\frac{\left|\mu_j\right|^2}{(1-|w_j|^2)^2}\right)^{\frac{1}{2}},
\end{align*}
therefore, 
$$
\left(\sum_{i=1}^n\left|\lambda_i\right|^2\left\|f'_\varphi(z_i)\right\|^{2}\right)^{\frac{1}{2}}
\leq(1+\varepsilon)\left\|\varphi\right\|\left(\sum_{j=1}^m\frac{\left|\mu_j\right|^2}{(1-|w_j|^2)^2}\right)^{\frac{1}{2}},
$$
and the arbitrariness of $\varepsilon$ yields  
$$
\sum_{i=1}^n\left|\lambda_i\right|^2\left\|f'_\varphi(z_i)\right\|^{2}
\leq\left\|\varphi\right\|^2\sum_{j=1}^m\frac{\left|\mu_j\right|^2}{(1-|w_j|^2)^2}.
$$
Therefore, $f_\varphi\in\Gamma^{\widehat{\B}}_2(\D,X^*)$ with $\gamma^{\B}_2(f_\varphi)\leq\left\|\varphi\right\|$ by Theorem \ref{theo-8}. 

Finally, for any $\gamma=\sum_{i=1}^n \lambda_i\gamma_{z_i}\otimes x_i\in\lin(\Gamma(\D))\otimes_{w_2^\B} X$, we get 
$$
\Lambda(f_\varphi)(\gamma) =\sum_{i=1}^n\lambda_i\left\langle f'_\varphi(z_i),x_i\right\rangle =\sum_{i=1}^n\lambda_i\varphi(\gamma_{z_i}\otimes x_i) =\varphi\left(\sum_{i=1}^n\lambda_i\gamma_{z_i}\otimes x_i\right) =\varphi(\gamma). 
$$
Hence $\Lambda(f_\varphi)=\varphi$ on a dense subspace of $\lin(\Gamma(\D))\widehat{\otimes}_{w_2^\B} X$ and, consequently, $\Lambda(f_\varphi)=\varphi$. Moreover, $\gamma^{\B}_2(f_\varphi)\leq\left\|\Lambda(f_\varphi)\right\|$, and the first assertion in the statement follows. For the second, note that  
$$
\langle(\Lambda^{-1}(\varphi))'(z),x\rangle=\langle f'_\varphi(z),x\rangle=\langle F_\varphi(z),x\rangle=\varphi(\gamma_z\otimes x)
$$
for $\varphi\in (\lin(\Gamma(\D))\widehat{\otimes}_{w_2^\B} X)^*$, $z\in \D$ and $x\in X$.

For the third, let $(f_i)_{i\in I}$ be a net in $\Gamma^{\widehat{\B}}_2(\D,X^*)$ and $f\in\Gamma^{\widehat{\B}}_2(\D,X^*)$. Assume that $(f_i)_{i\in I}\to f$ weak* in $\Gamma^{\widehat{\B}}_2(\D,X^*)$. 
This implies that 
$$
(<f'_i(z),x>)_{i\in I}=(\Lambda(f_i)(\gamma_z\otimes x))_{i\in I}\to \Lambda(f)(\gamma_z\otimes x)=<f'(z),x>
$$
for every $z\in\D$ and $x\in X$. Given $z\in\D$ and $x\in X$, we have
\begin{align*}
\left|\left\langle f_i(z)-f(z),x\right\rangle\right|&=\left|\int_{[0,z]}\left\langle f'_i(w)-f'(w),x\right\rangle\ dw\right|\\
                        &\leq |z|\max\left\{\left|\left\langle f'_i(w)-f'(w),x\right\rangle\right|\colon w\in [0,z]\right\}\\
												&=|z|\left|\left\langle f'_i(w_z)-f'(w_z),x\right\rangle\right|
\end{align*}
for all $i\in I$ and some $w_z\in [0,z]$, and thus $(\left\langle f_i(z),x\right\rangle)_{i\in I}\to \left\langle f(z),x\right\rangle$. Hence $(f_i)_{i\in I}\to f$ in the topology of pointwise $\sigma(X^*,X)$-convergence. Therefore, the identity on $\Gamma^{\widehat{\B}}_2(\D,X^*)$ is a continuous bijection from the weak* topology to the topology of pointwise $\sigma(X^*,X)$-convergence. On the unit ball, the former topology is compact and the latter is Hausdorff, and so they are equal.
\end{proof}










\end{document}